\definecolor{shadecolor}{gray}{0.875}
\numberwithin{equation}{section}
\newcommand{\PP}{\mathbb{P}}
\newcommand{\OO}{\mathcal{O}}
\newcommand{\MM}{\mathcal{M}}
\newcommand{\CC}{\mathbb{C}}
\newcommand{\Hom}{\operatorname{Hom}}
\newcommand{\Homn}{\operatorname{Hom^0}}
\newcommand{\codim}{\operatorname{codim}}
\newcommand{\PGL}{\operatorname{PGL}}
\newcommand{\xX}{\mathcal{X}}
\newcommand{\cU}{\mathcal{U}}
\newcommand{\cB}{\mathcal{B}}
\newcommand{\cX}{\mathcal{X}}
\newcommand{\cY}{\mathcal{Y}}
\theoremstyle{plain}
\newtheorem{lemma}{Lemma}[section]
\newtheorem*{theorem*}{Theorem}
\newtheorem*{lemma*}{Lemma}
\newtheorem*{proposition*}{Proposition}
\newtheorem*{conjecture*}{Conjecture}
\newtheorem*{corollary*}{Corollary}
\newtheorem*{problem*}{Problem}
\newtheorem{theorem}[lemma]{Theorem}
\newtheorem{conjecture}[lemma]{Conjecture}
\newtheorem{corollary}[lemma]{Corollary}
\newtheorem{proposition}[lemma]{Proposition}
\newtheorem{question}[lemma]{Question}
\theoremstyle{definition}
\newtheorem{remark}[lemma]{Remark}
\begin{document}

\title{Restrictions on rational surfaces lying in very general hypersurfaces}
\author[R. Beheshti]{Roya Beheshti}
\address{Department of Mathematics and Statistics\\Washington University in St. Louis, MO 63105}
\email{beheshti@wustl.edu}

\author[E. Riedl]{Eric Riedl}
\address{Department of Mathematics\\University of Notre Dame, IN 46556}
\email{ebriedl@nd.edu}

\begin{abstract}
We study rational surfaces on very general Fano hypersurfaces in $\mathbb{P}^n$, with an eye toward unirationality. We prove that given any fixed family of rational surfaces, a very general hypersurface of degree $d$ sufficiently close to $n$ and $n$ sufficiently large will admit no maps from surfaces in that family. In particular, this shows that for such hypersurfaces, any rational curve in the space of rational curves must meet the boundary. We also prove that for any fixed ratio $\alpha$, a very general hypersurface in $\mathbb{P}^n$ of degree $d$ sufficiently close to $n$ will admit no generically finite maps from a surface satisfying $H^2 \geq \alpha HK$, where $H$ is the pullback of the hyperplane class from $\mathbb{P}^n$ and $K$ is the canonical bundle on the surface.
\end{abstract}

\maketitle

\section{Introduction}
There are many competing notions for what it means for a variety to be ``like'' projective space. Three of the most common are: \emph{rational}, meaning birational to $\PP^n$; \emph{unirational}, meaning admitting a dominant morphism from $\PP^n$; and \emph{rationally connected}, meaning for two general points, there exists a rational curve through both. Celebrated results of Griffiths-Harris, Artin-Mumford, and Iskovskikh-Manin \cite{clemens-griffiths, artin-mumford, iskovskikh-manin} show that there are unirational varieties that are not rational. However, it remains an open question whether rationally connected varieties are always unirational.

\begin{question}
\label{q-RCnotUnirat}
Does there exist a variety that is rationally connected but not unirational?
\end{question}

Due to the classification of surfaces, any counterexample would need to have dimension at least three. It is generally expected that the answer to Question \ref{q-RCnotUnirat} is yes, and an often-discussed source of examples is Fano hypersurfaces of large degree. A smooth hypersurface of degree $d$ in $\PP^n$ is Fano if $d\leq n$, and every Fano variety  is rationally connected. It is known that smooth hypersurfaces of degree 
$d$ in $\PP^n$ are unirational when $2^{d!} \leq n$ (see \cite{beheshti-riedl2} and \cite{hmp}), but it is expected that very general hypersurfaces of large degree, specifically those with degree approximately $n$ in $\PP^n$, are not unirational. Schreieder \cite{schreieder} has recently proved that the degree of a unirational parametrization of a Fano hypersurface of large degree must be 
extremely large by showing that it should be divisible by every integer $m$ such that $m \leq d - \log_2 n$.

Since any unirational variety will be swept out by rational surfaces, we can answer Question \ref{q-RCnotUnirat} negatively by finding a rationally connected variety that is not swept out by rational surfaces, as proposed by Koll\'{a}r. To that end, there has been a lot of past work studying rational surfaces on Fano hypersurfaces. Testa \cite{testa} generalizes work of Beheshti and Starr \cite{beheshti-starr} and proves that a smooth complete intersection of index 1 is not swept out by rational surfaces $S$ with $\omega_S$ nef. Observe that a rational curve in the space of rational curves on $X$ corresponds to a rational surface in $X$. Beheshti \cite{beheshti} proves that spaces of rational curves of low degree are not uniruled, and in \cite{beheshti-riedl} the authors generalize that work by showing that there are no rational surfaces in $X$ ruled by low-degree rational curves, the generic one of which is smooth.

In this paper, we prove several results restricting the types of rational surfaces that lie in a general hypersurface. Roughly speaking, we show that given a fixed surface or family of surfaces, a general Fano hypersurface of degree approximately $n$ in $\PP^n$ admits no generically finite maps from these surfaces. More precisely, we prove the following.

\begin{theorem}
\label{thm-introThm}
Let $X$ be a hypersurface of degree $d$ in $\PP_{\CC}^n$. Then: \begin{enumerate}
\item (cf Corollary \ref{cor-noFixedFamily}) If $n \geq d > \frac{(2-\sqrt{2})(3n+k+1)}{2}+2$ and $X$ is very general with respect to some fixed $k$-dimensional family of rational surfaces $\mathcal{S} \to B$, then $X$ admits no generically finite maps from a fiber of $\mathcal{S} \to B$. In particular, $X$ contains no Hirzebruch surfaces, so there is no complete rational curve in the locus parametrizing embedded smooth rational curves in the Kontsevich space $\overline{\MM}_{0,n}(X,e)$. 
\item (cf Corollary \ref{cor-alphaLambdaNoSurfaces}) If $\alpha$ is a fixed positive number and $\lambda < 1$ is fixed with $\lambda > \frac{3}{2}(2 - \sqrt{2})$, then for sufficiently large $n$, a very general hypersurface of degree $d \geq n\lambda$ admits no generically finite morphisms from a rational surface $S$ with $H \cdot K \leq \alpha H^2$, where $H$ is the pullback of the hyperplane class to $S$ and $K$ is the canonical class on $S$.
\end{enumerate}
\end{theorem}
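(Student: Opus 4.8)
The plan is to pass to the universal family of hypersurfaces and run a dimension count on an incidence correspondence, controlling the two sides by deformation theory and Riemann--Roch. Let $\mathcal{H}=\PP H^0(\PP^n,\OO(d))$ and $N=\dim\mathcal{H}$. For Part (1) the surfaces come from the fixed $k$-dimensional family $\mathcal{S}\to B$; for Part (2) I would first show that the rational surfaces $S$ carrying a base-point-free big class $H$ with $H\cdot K_S\le\alpha H^2$ and $h^0(S,H)\ge n+1$ form a bounded family --- this is the first use of the slope inequality, which bounds $H^2$ and hence the number of curves that can be blown down. In either case, if a very general $X$ admitted a generically finite morphism from one of these surfaces, then by the usual countability argument the locus of such $X$ in $\mathcal{H}$ would be dense, so some irreducible component $\mathcal{M}$ of the parameter space of triples $(b,f,[X])$ with $f\colon \mathcal{S}_b\to X$ generically finite would dominate $\mathcal{H}$, and hence $\dim\mathcal{M}\ge N$.

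Next I would estimate $\dim\mathcal{M}$ as (dimension of the family of maps $f$ that occur)$\,+\,(N-h_Y(d))$, where $Y=f(\mathcal{S}_b)$ is the image and $h_Y$ its Hilbert function, the second summand being the dimension of the linear system of degree-$d$ hypersurfaces through $Y$. To keep the first summand honest I would factor $f$ through its image and work with the image surface rather than the map, since a high-degree cover of a low-degree surface produces many maps but only one image. For the left side, the Euler sequence on $\PP^n$ restricted to a resolution of $Y$ gives a bound of the shape $(n+1)h^0(Y,\OO_Y(1))$ plus the bounded contribution of the moduli and automorphisms of $Y$, and Riemann--Roch on the surface (using $h^1(\OO_S)=0$) converts $h^0$ of the hyperplane class into the numerical data $H^2$ and $H\cdot K_S$; in Part (2) this is the second place the slope hypothesis enters, and in Part (1) it is where the fixed family enters. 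For the right side I would bound $h_Y(d)$ from below: if $Y$ spans $\PP^{n'}$, a general hyperplane section is a non-degenerate irreducible curve whose Hilbert function dominates that of the rational normal curve, and summing first differences yields $h_Y(d)\gtrsim(n'-1)\binom{d+1}{2}$.

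Feeding both estimates into the resulting inequality $\dim\{Y\}\ge h_Y(d)$ and eliminating $H^2$, $H\cdot K_S$, and the span dimension $n'$ against the rationality constraints on $S$ produces the numerical thresholds in the statement: the inequality one must violate is essentially quadratic in $d$ and $n$, and its relevant root yields the constant $2-\sqrt2$ in Part (1); in Part (2) the hypothesis $H\cdot K_S\le\alpha H^2$ absorbs the $\alpha$-dependence into lower-order terms, so the inequality fails as soon as $d\ge n/\lambda$ and $n$ is large. Part (1) is then immediate for the given family, the Hirzebruch case is the one-parameter family of Hirzebruch surfaces, and the Kontsevich-space assertion follows because a complete rational curve in the locus of smooth embedded rational curves, after normalizing the associated ruled surface, yields a Hirzebruch surface mapping to $X$; Part (2) combines the boundedness above with the asymptotic form of the inequality.

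The delicate point --- and the main obstacle --- is making the reduction to image surfaces genuinely lossless: one must control maps that are not birational onto their image and images that are highly degenerate, singular, or linearly abnormal, and then carry out the optimization precisely enough to reach the stated closed-form thresholds rather than an unilluminating inequality.
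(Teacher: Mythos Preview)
Your strategy is quite different from the paper's, and as written it has real gaps.

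\textbf{What the paper does.} Neither part is proved by a Hilbert-function dimension count on an incidence correspondence. The engine is a Voisin--Pacienza positivity statement: if a very general $X$ is swept out by the relevant surfaces, then for some $f\colon S\to X$ the sheaf $N_f$ (for Part~(2)) or $f^*T_X$ (for Part~(1)) is generically globally generated \emph{and} its twist by $H$ is globally generated. That extra twist is the whole point; it is strictly stronger than what a naive parameter count sees. One then proves a vanishing lemma (using the $\PP^1$-ruling on $S$) giving $\chi(E(A+D))\ge m\,\chi(\OO_S(A+D))$ for suitable $A,D$, and compares this with a direct Riemann--Roch computation of $\chi(N_f((n+2-d)H+K))$. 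The resulting quadratic in $n,d$ is exactly what produces $2-\sqrt{2}$. For Part~(1) the paper further reduces mod $p$ and pulls back by Frobenius, replacing $H$ by $pH$ and letting $p\to\infty$ to isolate the $H^2$-coefficient; and in both parts the passage from ``not swept out in $\PP^m$'' to ``no such surfaces at all in $\PP^n$'' is done by the Coskun--Riedl tower-of-induced-varieties mechanism, which converts codimension $1$ in $\cU_{m,d}$ into codimension $2(m-n)+1$ in $\cU_{n,d}$. The factor $3$ in $(3n+k+1)$ comes from this bootstrapping, not from the surface estimate itself.

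\textbf{Specific problems with your plan.} First, the boundedness claim in Part~(2) is false: already $(\PP^2,\,dL)$ satisfies $H\cdot K=-3d\le \alpha H^2$ for every $d\ge1$ and every $\alpha>0$, so the pairs $(S,H)$ in question are unbounded. The paper never needs boundedness; its inequality (Theorem~\ref{euler-inequality}) together with Reider's theorem and the slope hypothesis fails uniformly in $H^2$. Second, your route to the constant $2-\sqrt{2}$ is asserted rather than derived: the Hilbert-function lower bound via rational-normal-curve sections gives a quadratic in $d$ with different coefficients, and there is no reason it should reproduce the paper's threshold, which arises from the very specific twist $(n+2-d)H+K$ in the Euler-characteristic comparison. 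Third, you have no analogue of the mod-$p$/Frobenius amplification or the Coskun--Riedl codimension tower, and without the latter you cannot account for the shape $\frac{(2-\sqrt2)(3n+k+1)}{2}+2$ in Part~(1). Finally, the ``delicate point'' you flag---non-birational $f$, degenerate or linearly abnormal images---is precisely what sinks crude incidence counts for surfaces; the paper sidesteps it entirely by working intrinsically with $N_f$ and $f^*T_X$ on $S$.
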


For particular types of surfaces, we prove stronger restrictions. See Corollary \ref{cor-delPezzos} for a statement about del Pezzo surfaces and Corollary \ref{cor-HarbourneHirshowitz} for a statement about blowups of $\PP^2$ at general points. 

The basic idea of the proofs is to study the normal sheaf $N_{f/X}$ of a morphism $f$ from a rational surface to $X$ and understand the Euler characteristic of its twists. A direct calculation shows that $N_{f/X}(kH)$ must have negative Euler characteristic for small positive integers $k$. However, a careful analysis of globally generated sheaves on rational surfaces shows that in fact this Euler characteristic must be positive. In Section \ref{sec-Familiesofsurfaces}, we lay out the core of this technique, working with twists of $N_{f/X}$. This allows us to find $d$ and $n$ so that a very general degree $d$ hypersurface $X$ in $\PP^n$ will not be swept out by rational surfaces with $H^2$ larger than some fixed multiple of $HK$. In Section \ref{sec-Onefixedsurface}, we reduce mod $p$ and apply a similar argument to the restricted tangent bundle $f^* T_X$. We show how techniques from \cite{CoskunRiedl} then imply the statement of Theorem \ref{thm-introThm}.

\subsection*{Acknowledgements} We gratefully acknowledge helpful conversations with Izzet Coskun, Lena Ji, Eric Larson and John Lesieutre and Andrew Putman. We also gratefully acknowledge many helpful comments from the anonymous referee. Roya Beheshti was supported by 
NSF grant DMS-2101935. Eric Riedl was supported by NSF CAREER grant DMS-1945944.

\section{Families of rational surfaces}
\label{sec-Familiesofsurfaces}

By a family of smooth rational surfaces we mean a smooth and projective morphism $q: \mathcal S \to B$ such that $B$ is a quasi-projective variety and 
the fibers of $q$ are rational surfaces.  Let $X$ be a smooth  hypersurface of degree $d\leq n$ in $\PP^n$. Let $S$ be a fiber of $q$ and $f: S \to X$ a 
generically finite morphism. We denote by $N_f$ the normal sheaf of $f$, that is the cokernel of the injective map 
$T_S \to f^*T_X.$ To emphasize the range, we sometimes write $N_{f,X}$ instead of $N_f$.

We start with a positivity result about the normal sheaf of $f$ when $X$ is a very general hypersurfaces. The result and technique come from work of Voisin \cite{Voisin} and Pacienza \cite{Pacienza}, although we give a proof for completeness.

\begin{proposition}\label{cor-hypersurface}
Let $\mathcal{X} \to H^0(\OO_{\PP^n}(d))$ be the universal hypersurface on $\PP^n$ and let $q: \mathcal{S} \to B$ be a family of rational surfaces with  morphisms $\phi:\mathcal{S} \to \mathcal{X}$ and $g: B \to H^0(\OO_{\PP^n}(d))$ commuting with the natural projection maps. Assume that $\phi$ is dominant and its restriction 
$f$ to a general fiber $S := \mathcal S_b$ of $q$ is generically finite. Then $N_f$ is generically globally generated and  $N_f(H)$ is 
globally generated, where $H$ is the pullback of the hyperplane section under $f$. 
\end{proposition} 

\begin{proof} 
 Let $f: S \to X$ be the restriction of $\phi$ to a general fiber of $q$. The fact that $N_f$ is generically globally generated follows from basic deformation theory and the fact that $X$ is swept out by images of surfaces from this family. Indeed, let $B_X=g^{-1}([X])$ and 
 $\phi_X: \mathcal S_X\to X$ the restriction of $\phi$ to the fiber over 
 $B_X$. Then $\phi_X$ is dominant by our assumption. So by generic smoothness, for a general point $(b,s)$ of $\mathcal S_X$, the induced map on Zariski tangent spaces $T_{\mathcal S_X,(b,s)} \to T_{X,x}=f^*T_X|_s$ is surjective. Therefore the map 
$T_{\mathcal S_X,(b,s)} \to N_{f}|_s$ is surjective as well. 
There is a map from $T_{\mathcal S_X,(b,s)}$ to $H^0(N_f)$ (see for example \cite[Theorem 3.4.8]{Sernesi}),
and the map $T_{\mathcal S_X,(b,s)}\to N_{f}|_s$ factors through the map $T_{\mathcal S_X,(b,s)} \to 
H^0(N_f)$, so the desired result follows.

For the second claim, we only use the assumption that  for a general hypersurface $X$, there exists a generically finite morphism from a fiber of $q$ to $X$. 

We can take an \'etale base change $U \to H^0(\OO_{\PP^n}(d))$ to obtain a family $\cX_U \to U$ of hypersurfaces with a family $\cY \to U$ of rational surfaces parametrized by $q$ mapping to fibers of $\cX_U \to U$ via a map $\psi: \cY \to \cX_U$ such that $\cY$ admits a natural $\PGL_{n+1}$ action. Denote by $\pi$ the projection map 
from $\mathcal X_U$ to $\PP^n$ and let $\pi' = \pi \circ \psi$.  The induced map on tangent bundles $T_{\cY} \to \pi'^{*}T_{\PP^n}$ is surjective because of the $\PGL_{n+1}$ invariance of $\cY$. We have the following commutative diagram: 

\catcode`\@=11
\newdimen\cdsep
\cdsep=3em

\def\cdstrut{\vrule height .25\cdsep width 0pt depth .12\cdsep}
\def\@cdstrut{{\advance\cdsep by 2em\cdstrut}}

\def\arrow#1#2{
  \ifx d#1
    \llap{$\scriptstyle#2$}\left\downarrow\cdstrut\right.\@cdstrut\fi
  \ifx u#1
    \llap{$\scriptstyle#2$}\left\uparrow\cdstrut\right.\@cdstrut\fi
  \ifx r#1
    \mathop{\hbox to \cdsep{\rightarrowfill}}\limits^{#2}\fi
  \ifx l#1
    \mathop{\hbox to \cdsep{\leftarrowfill}}\limits^{#2}\fi
}
\catcode`\@=12

\cdsep=3em
$$
\begin{matrix}
& & 0 & & 0 & &  \cr
& & \arrow{u}{} & & \arrow{u}{} & &  \cr
 &  &  \pi'^{*}T_{\PP^n} & \arrow{r}{=} & \pi'^{*}T_{\PP^n} &   \cr
& & \arrow{u}{} & & \arrow{u}{} & &  \cr
0 & \arrow{r}{} &  T_{\cY}  & \arrow{r}{} & \psi^* T_{\cX_U} & \arrow{r}{} & N_{\psi,\cX_U} & \arrow{r}{} & 0          \cr
& &  \arrow{u}{} & & \arrow{u}{} & & \arrow{u}{=} \cr
0 & \arrow{r}{} & T_{\cY/ \PP^n} & \arrow{r}{} & \psi^*T_{\cX_U/ \PP^n} & \arrow{r}{} & N & \arrow{r}{} & 0 \cr
& & \arrow{u}{} & & \arrow{u}{} & &  \cr
& & 0  & & 0 & &  \cr
\end{matrix}
$$
If $u$ is a general point of $U$ and $f: \mathcal Y_u \to \mathcal X_u$ is the restriction of $\psi$ to the fiber over $u$, then 
$N_{\psi,\mathcal X_U}|_{\mathcal Y_u} = N_{f, \mathcal X_u}$, so to show $N_{f, \mathcal X_u}(H)$ is globally generated, it is enough to show 
$T_{\cX_U/ \PP^n} \otimes \pi^*\OO_{\PP^n}(1)$ is globally generated. Consider the following diagram

$$
\begin{matrix}
& & 0 & & 0 \cr
& & \arrow{u}{} & & \arrow{u}{} \cr
& & \pi^* \OO(d) & \arrow{r}{=} & \pi^* \OO(d) \cr
& & \arrow{u}{} & & \arrow{u}{} \cr
0 & \arrow{r}{} & \OO \otimes S_d  & \arrow{r}{} & \pi^*T_{\PP^n} \oplus \OO \otimes S_d & \arrow{r}{} & \pi^*T_{\PP^n} & \arrow{r}{} & 0          \cr
& &  \arrow{u}{} & & \arrow{u}{} & & \arrow{u}{\cong} \cr
0 & \arrow{r}{} & T_{\xX_U/\PP^n} & \arrow{r}{} & T_{\xX_U} & \arrow{r}{} & \pi^*T_{\PP^n} & \arrow{r}{} & 0 \cr
& & \arrow{u}{} & & \arrow{u}{} \cr
& & 0  & & 0 \cr
\end{matrix}
$$

Using the eight lemma and looking at the first column, it follows that $T_{\cX_U/\PP^n}$ is the kernel of the map $\OO \otimes S_d \to \pi^*\OO(d)$. Such bundles are called Lazarsfeld-Mukai bundles, and so we may say $T_{\cX_U/\PP^n} \cong M_d$, the pullback of the Lazarsfeld-Mukai bundle of $\OO_{\PP^n}(d)$. 

We can similarly define $M_1$ to be the kernel of the natural map $\OO \otimes S_1 \to \pi^*\OO(1)$. The bundle $M_d$ admits a surjection from a direct sum of copies of $M_1$, with maps given by multiplication by a general degree $d-1$ polynomial. It follows that $N_{h,\cX_U}$ admits a surjection from a direct sum of copies of $M_1$. Taking the second wedge power of the sequence
\[ 0 \to M_1 \to \OO \otimes S_1 \to \OO(1) \to 0 \]
we see that $M_1(1)$ is globally generated, and hence $M_d(1)$ is too, so the global generation result follows.

\end{proof}

We will use the above proposition to give a lower bound on the Euler characteristic of the twists of the normal sheaf of $f$. To do so, we will need the following result. 
\begin{proposition}\label{vanishing-cohomology}
Let $S$ be a rational surface over an algebraically closed field and $\pi: S \to \PP^1$ a dominant morphism whose general fibers are isomorphic to $\PP^1$. Let $E$ be a coherent sheaf of rank $m$ on $S$ 
which is generically globally generated. Assume $A$ and $D$ are divisors on $S$ such that $A$ is nef and big, and 
\begin{itemize}
\item[(a)] $E(A)$ is globally generated,
\item[(b)] $ \deg D|_C \geq \deg E|_C$ for a general fiber $C$ of $\pi$,
\item[(c)] $H^1(\OO_S(D))=0$.
\end{itemize}
Then 
$\chi (E(A+D)) \geq  m \; \chi (\OO_S(A+D)).$
\end{proposition}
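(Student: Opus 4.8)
The plan is to argue by induction on the rank $m$, peeling off a rank-one piece at each step using the generic global generation of $E$. Since Euler characteristics are additive in short exact sequences and the torsion-free quotient of $E$ inherits all the hypotheses of the proposition, it suffices to treat the torsion-free case and the case of a pure torsion sheaf separately; assume first that $E$ is torsion-free. A general global section of $E$ is then nonzero at the generic point, and dividing by the divisorial part of its zero scheme produces an injection $\OO_S(M_1)\hookrightarrow E$ with $M_1\geq 0$ effective whose image is a saturated line subbundle; hence $E_1:=E/\OO_S(M_1)$ is torsion-free of rank $m-1$. Crucially $E_1$ satisfies the hypotheses again: it is generically globally generated (a quotient of $E$), $E_1(A)$ is globally generated (a quotient of $E(A)$), $H^1(\OO_S(D))=0$ is untouched, and $\deg D|_C\geq\deg E|_C\geq\deg E_1|_C$ because $c_1(E_1)=c_1(E)-M_1$ while $M_1\cdot C\geq 0$ for a general fibre $C$ of $\pi$. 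Iterating, $E$ acquires a filtration whose graded pieces are line bundles $\OO_S(M_1),\dots,\OO_S(M_{m-1})$ and one rank-one torsion-free sheaf $I_Z(M_m)$, with every $M_i\geq 0$ effective and $Z$ zero-dimensional; in particular $c_1(E)=\sum_i M_i$ is effective.

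Euler characteristics being additive, twisting the filtration by $\OO_S(A+D)$ and applying Riemann--Roch on $S$ together with $\chi(\OO_S)=1$ (valid since $S$ is rational) gives
\[
\chi\bigl(E(A+D)\bigr)-m\,\chi\bigl(\OO_S(A+D)\bigr)=\tfrac12\sum_{i=1}^{m}M_i\cdot\bigl(M_i+2(A+D)-K_S\bigr)-\ell(Z).
\]
Therefore it suffices to show the right-hand side is nonnegative. Here the nefness of $A$ gives $M_i\cdot 2A\geq 0$; adjunction on $S$ controls $M_i\cdot(M_i-K_S)$; and $\ell(Z)\leq(M_m+A)^2$ because $I_Z(M_m+A)$ is globally generated, so two general sections of it cut out a zero-dimensional scheme of length $(M_m+A)^2$ containing $Z$. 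What controls the remaining intersection numbers $M_i\cdot D$ is the fibration together with hypothesis (b): restricting to a general fibre $C\cong\PP^1$, generic global generation forces $E|_C=\bigoplus_j\OO_C(a_j)$ with all $a_j\geq 0$, so (b) reads $\deg D|_C\geq\sum_j a_j\geq 0$, i.e. $D$ is sufficiently positive along the ruling; combined with $A$ nef and $H^1(\OO_S(D))=0$, this fibrewise positivity is what one propagates in order to bound the $M_i\cdot(A+D)$ from below.

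The step I expect to be the main obstacle is precisely this last assembly --- arranging the estimates so that the error terms $\ell(Z)$ and $\tfrac12\sum_i M_i\cdot(M_i-K_S)$ are absorbed by $\sum_i M_i\cdot(A+D)$ --- since it is here that the full force of (b) and (c), not merely the positivity of $A$, is needed, and here that the special (possibly reducible or non-reduced) fibres of $\pi$ must be handled with care. The torsion case is a separate and in some ways more delicate matter: for $E$ a pure torsion sheaf one must show $\chi(E(A+D))\geq 0$, and because $E$ is then supported on a curve --- along which a twisted Euler characteristic can a priori be negative --- this again requires restricting the globally generated sheaf $E(A)$ to that curve and exploiting the fibre-degree bound (b).
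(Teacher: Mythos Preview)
Your outline is honest about where it stalls, and indeed the gap you flag is real: after passing to the filtration you have to show
\[
\tfrac12\sum_i M_i\cdot\bigl(M_i+2(A+D)-K_S\bigr)\ \geq\ \ell(Z),
\]
and nothing in the hypotheses bounds the individual terms $M_i\cdot D$ from below. Hypothesis~(b) only constrains the pairing of $D$ with the fibre class $C$, not with the effective divisors $M_i$ that you produced; the $M_i$ can have large horizontal components and pair very negatively with $D$. Likewise $M_i\cdot(M_i-K_S)=2\chi(\OO_S(M_i))-2$ is not bounded below without controlling $h^1(\OO_S(M_i))$, and effectivity alone does not do this on an arbitrary rational surface. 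So the ``assembly'' step is not a matter of bookkeeping: the intersection-theoretic inequalities you would need simply do not follow from (a)--(c). The torsion case you postpone has the same flavour.

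The paper bypasses all of this by \emph{not} filtering $E$ by line bundles. Instead, it takes the single quotient $T=E/\OO_S^m$ by $m$ general sections (so $T$ is torsion) and shows directly that $\chi(T(A+D))\geq 0$ by proving the cohomological vanishing $H^1(T(A+D))=0$. Using (a), $T(A)$ is a quotient of $\OO_S^{\,l}$; call the kernel $M$. From the exact sequence $0\to M(D)\to\OO_S(D)^l\to T(A+D)\to 0$ and (c), the vanishing reduces to $H^2(M(D))=0$. Now the fibration does the work: via Leray for $\pi$ it suffices that $H^1(M(D)|_C)=0$ on a general fibre $C\cong\PP^1$. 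There $M|_C\hookrightarrow\OO_C^{\,l}$ forces $M|_C=\bigoplus\OO(a_i)$ with all $a_i\leq 0$ and $\sum a_i=-\deg E|_C$, and hypothesis~(b) then pins each $a_i\geq -1-D\cdot C$, giving the vanishing. The point is that (b) is used on the \emph{fibre}, where it is exactly the right numerical constraint, rather than being converted into global intersection numbers with unknown divisors $M_i$.

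So the missing idea is: work with the torsion cokernel $T$ of $\OO_S^m\hookrightarrow E$ and prove $H^1(T(A+D))=0$ by restriction to the generic fibre, rather than attempting to filter $E$ by line bundles and estimate Riemann--Roch term by term.
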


\begin{proof}
Since $E$ is generically globally generated, choosing $m$ general sections of $E$, we get an injective map $\OO_S^m \to E$. The cokernel of this map, denoted by $T$,  is a torsion sheaf. Since $E(A)$ is globally generated, there is a surjective map $\OO_S^l \to E(A)$ for some $l$. This in turn gives a surjective map $\OO_S^l \to T(A)$ whose kernel we denote by 
$M$. 
$$ \xymatrix{
& & 0 \ar[d] & 0 \ar[d] & \\  
& &    \OO_S(A)^{m} \ar[d] \ar[r]^{=} & \OO_S(A)^{m}  \ar[d]  & \\
0 \ar[r] & M \ar[r]  \ar[d]^{=} & \OO_S^{ l} \oplus \OO_S(A)^{m} \ar[r] \ar[d] & E(A)  \ar[r] \ar[d] & 0  \\
0 \ar[r] & M \ar[r] & \OO_S^{l} \ar[r] \ar[d] & T(A) \ar[d] \ar[r] & 0 \\
& & 0 & 0. & 
}
$$
We claim $H^1( T(A+D)) = 0$. By our assumption $H^1(\OO_S(D)) =0$, so applying the long exact sequence of cohomology to the first sequence twisted with $\OO_S(D)$, the claim follows if we show $H^2(M(D))) = 0$. Applying the Leray spectral sequence corresponding to the map $p$, it is enough to show that $H^1(M(D)|_C)=0$ where $C$ is a  general fiber of $\pi$. Since $C$ is a general fiber, the first short exact sequence above 
remains exacts after restricting to $C$, so $M|_C$ is torsion free. If $M|_C = \OO(a_1) \oplus \dots \oplus \OO(a_l)$, then since $M|_C$ injects into $\OO_C^{ l}$, 
we have $a_i \leq 0$ for each $i$. Also, $$\sum_{i=1}^l a_i = - \deg (T(A))|_C = -\deg T|_C = -\deg E|_C$$ since $T|_C$ is torsion. 
If $a_i < -1-D \cdot C$ for some $i$, then $$ 0 \geq \sum_{j \neq i} a_j = - \deg E|_C - a_i > -\deg E|_C+D\cdot C +1,$$ contradicting assumption (b). So 
$a_i \geq -1 - D \cdot C$ for every $i$ and therefore $H^1(M(D)|_C) =0$. 

This shows that $H^1(T(A+D))=0$, so $\chi (T(A+D)) \geq 0$, and the desired result follows from the short exact sequence 
$$ 0 \to \OO_S(A+D)^m \to E(A+D) \to T(A+D) \to 0.$$
\end{proof}

\begin{corollary} 
\label{cor-eulerChar}
With the same assumptions as in Proposition \ref{cor-hypersurface}, 
$$\chi (N_f((n+2-d)H+K) )\geq (n-3) (\frac{(n+2-d)^2H^2 +(n+2-d) H \cdot K}{2}+1).$$
\end{corollary}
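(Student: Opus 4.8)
The plan is to derive the corollary from Proposition~\ref{vanishing-cohomology}, applied to $E=N_f$ with $f\colon S\to X$ the generically finite morphism produced by Proposition~\ref{cor-hypersurface}, the divisors $A=H$ and $D=(n+1-d)H+K$ (so that $A+D=(n+2-d)H+K$), and a morphism $\pi\colon S\to\PP^1$ with general fiber $\PP^1$. The choice $A=H$ is essentially forced: it is the smallest multiple of $H$ for which $N_f(A)$ is known to be globally generated, and it then determines $D$. With it, $A=H$ is nef and big (the pullback of an ample class under a generically finite morphism, so nef with $H^2>0$); hypothesis~(a) holds because $N_f(H)$ is globally generated by Proposition~\ref{cor-hypersurface}; and $E=N_f$ is generically globally generated, again by Proposition~\ref{cor-hypersurface}, of rank $m=\dim X-\dim S=n-3$. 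For the fibration $\pi$, every rational surface other than $\PP^2$ admits one (compose a birational morphism to a Hirzebruch surface with its ruling); if $S\cong\PP^2$ we first replace $S$ by its blow-up at a point and $f$ by the induced morphism, which a routine check shows preserves generic global generation of $N_f$ and global generation of $N_f(H)$ and leaves $H^2$, $H\cdot K$ and $\chi(\OO_S)=1$ unchanged. So we may assume $\pi$ exists.

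It remains to check hypotheses~(b) and~(c) of Proposition~\ref{vanishing-cohomology} for this data. For~(b), restrict $0\to T_S\to f^*T_X\to N_f\to 0$ to a general fiber $C\cong\PP^1$ of $\pi$. Since $T_S|_C$ is locally free on the smooth curve $C$ and $df$ is injective at the generic point of $S$, hence at the generic point of $C$, the map $T_S|_C\to f^*T_X|_C$ is injective, so the restricted sequence is exact and $\deg N_f|_C=\deg f^*T_X|_C-\deg T_S|_C$. Here $\deg f^*T_X|_C=f^*(-K_X)\cdot C=(n+1-d)(H\cdot C)$ because $-K_X=\OO_X(n+1-d)$, and $\deg T_S|_C=-K\cdot C=2$ by adjunction since $C^2=0$ and $C\cong\PP^1$; hence $\deg N_f|_C=(n+1-d)(H\cdot C)-2=\deg D|_C$, so~(b) holds (in fact with equality). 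For~(c), $D=K+(n+1-d)H$ with $(n+1-d)H$ nef and big (as $d\le n$ and $H$ is nef and big), so $H^1(\OO_S(D))=0$ by the Kawamata--Viehweg vanishing theorem.

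Proposition~\ref{vanishing-cohomology} then gives $\chi(N_f(A+D))\ge m\,\chi(\OO_S(A+D))$ with $m=n-3$, so it only remains to evaluate $\chi(\OO_S((n+2-d)H+K))$. Setting $L=(n+2-d)H+K$ we have $L-K=(n+2-d)H$, and since $S$ is rational $\chi(\OO_S)=1$, so Riemann--Roch gives
\[
\chi(\OO_S(L))=\chi(\OO_S)+\tfrac{1}{2}\,L\cdot(L-K)=1+\frac{(n+2-d)^2H^2+(n+2-d)\,H\cdot K}{2}.
\]
Multiplying by $m=n-3$ yields exactly the asserted inequality.

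I expect no genuine obstacle here, only bookkeeping: since $A$ is forced to be $H$, hypotheses~(b) and~(c) have to come out for the single available $D$, and they do---(b) as an exact equality, once one recognizes the adjunction values $-K_X\cdot C$ and $-K\cdot C$, and~(c) immediately from Kawamata--Viehweg. The one mildly delicate point is that the required ruling $\pi\colon S\to\PP^1$ exists only for $S\not\cong\PP^2$, which is dispatched by the harmless blow-up above.
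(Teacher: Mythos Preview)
Your proof is correct and follows essentially the same approach as the paper: both apply Proposition~\ref{vanishing-cohomology} with $E=N_f$, $A=H$, $D=(n+1-d)H+K$, verify~(b) via $\deg N_f|_C=(n+1-d)H\cdot C-2$ and~(c) via Kawamata--Viehweg vanishing, and then evaluate $\chi(\OO_S((n+2-d)H+K))$ by Riemann--Roch. You supply somewhat more detail than the paper (in particular on the blow-up of $\PP^2$ and on why the restricted sequence stays exact), but the argument is the same.
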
 
\begin{proof}
By Proposition \ref{cor-hypersurface}, for a general $f: S \to X$, $N_f$ is generically globally generated and $N_f(H)$ is globally generated. We apply Proposition \ref{vanishing-cohomology} to $E=N_f$, $D=(n+1-d)H+K$, and $A=H$. After possibly blowing up $S$ at a point, we may assume there is a morphism $\pi: S \to \PP^1$ whose general fibers are smooth rational curves. Condition (a) of Proposition \ref{vanishing-cohomology} is satisfied by our assumption. Since $\deg N_f|_C = (n+1-d) H \cdot C - 2$ for a general fiber $C$ of $\pi$, condition (b) is satisfied. By the Kawamata-Viehweg vanishing theorem, condition (c) is also satisfied, so $\chi (N_f((n+2-d)H+K)) \geq  (n-3) \; \chi (\OO_S(n+2-d)H+K).$
 Applying the Riemann-Roch theorem we get the desired inequality. 

\end{proof}

In the next lemma, we calculate the Euler characteristics of the normal sheaf of $f: S \to X$ twisted with $tH+K$ directly. 

\begin{lemma}\label{euler-char}
Let $X$ be a smooth hypersurface of degree $d$ in $\PP^n$, and let $f:S \to X$ be a morphism from a smooth rational surface $S$. If $H$ denotes the pull-back of the 
hyperplane section, $K$ the canonical divisor of $S$, and $N_f$ the normal sheaf of $f$, then we have 
\begin{equation*}
\begin{split}
\chi(N_f(tH+K)) & = \frac{1}{2} ((n-3)t^2+2t(n+1-d)+n+1-d^2) \; H^2\\
& + \frac{1}{2} (t(n-1)+n+1-d) \; H \cdot K \\
& - K^2 +n+9.
\end{split}
\end{equation*}

\end{lemma}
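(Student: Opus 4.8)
The plan is to compute $\chi(N_f(tH+K))$ via the exact sequence $0 \to T_S \to f^*T_X \to N_f \to 0$, which gives $\chi(N_f(tH+K)) = \chi(f^*T_X(tH+K)) - \chi(T_S(tH+K))$, and then evaluate each term by Riemann--Roch on the surface $S$. Note that $N_f$ need not be locally free — $f$ is only generically finite, so it may contract curves — but it agrees with a locally free sheaf away from a finite set, so its Chern classes are well-defined and Riemann--Roch (in the Hirzebruch--Riemann--Roch / Chern character form) still applies; alternatively one can just work with the alternating sum of the Euler characteristics in the sequence, which is what I will do. So the first step is to record $\chi(T_S(tH+K))$ using $\chi(\mathcal{O}_S)=1$ (since $S$ is rational), $c_1(T_S) = -K$, $c_2(T_S) = c_2(S) = 12 - K^2$ (Noether's formula on a rational surface, $\chi(\mathcal{O}_S) = (K^2 + c_2)/12$), and the divisor $D = tH+K$, giving a polynomial in $t$, $H^2$, $H\cdot K$, $K^2$.

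The second and more substantial step is to compute $\chi(f^*T_X(tH+K))$, for which I need the Chern classes of $f^*T_X$, i.e. the pullbacks of $c_1(T_X)$ and $c_2(T_X)$. From the Euler sequence on $\mathbb{P}^n$ restricted to $X$ and the normal bundle sequence $0 \to T_X \to T_{\mathbb{P}^n}|_X \to \mathcal{O}_X(d) \to 0$, one gets $c(T_X) = (1+h)^{n+1}/(1+dh)$ where $h$ is the hyperplane class on $X$; expanding, $c_1(T_X) = (n+1-d)h$ and $c_2(T_X) = \binom{n+1}{2}h^2 - (n+1)d\,h^2 + d^2 h^2 = \big(\binom{n+1}{2} - (n+1)d + d^2\big)h^2$. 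Pulling back along $f$ replaces $h$ by $H$ and $h^2$ by the number $H^2$ (as a zero-cycle degree on $S$). Then $\chi(f^*T_X(tH+K))$ is computed by Hirzebruch--Riemann--Roch for a rank-$n$ bundle on a surface: $\chi(\mathcal{E}) = \int_S \mathrm{ch}(\mathcal{E})\,\mathrm{td}(S)$, with $\mathrm{ch}(\mathcal{E}) = n + (c_1 + nD) + \frac{1}{2}((c_1+nD)^2 - 2(c_2 + (n-1)c_1 D + \binom{n}{2}D^2))$ for $\mathcal{E} = f^*T_X \otimes \mathcal{O}_S(D)$, $D = tH+K$, where $c_1 = (n+1-d)H$, $c_2 = \big(\binom{n+1}{2}-(n+1)d+d^2\big)H^2$, and $\mathrm{td}(S) = 1 - \frac{1}{2}K + \frac{1}{12}(K^2 + c_2(S))$ with $c_2(S) = 12 - K^2$ so that the degree-two part of $\mathrm{td}(S)$ is just $1$ (again using rationality).

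The final step is bookkeeping: subtract the two polynomials, collect the coefficients of $H^2$, $H\cdot K$, and $K^2$ as polynomials in $t$, and check they match the claimed formula — in particular that all the $c_2(S)$ and $K^2$ contributions combine to the stated $-K^2 + n + 9$, with no leftover $t$-dependence in that constant term, and that the $H^2$-coefficient is the quadratic $\frac{1}{2}((n-3)t^2 + 2t(n+1-d) + n+1-d^2)$ and the $H\cdot K$-coefficient the linear $\frac{1}{2}(t(n-1) + n+1-d)$. The main obstacle is purely computational care: keeping track of the cross terms $c_1 \cdot D$, $D^2$, and $c_1 \cdot K$ when expanding $\mathrm{ch}(f^*T_X(tH+K))\cdot\mathrm{td}(S)$, and correctly using Noether's formula to eliminate $c_2(S)$ in favor of $K^2$ — there is no conceptual difficulty, only the risk of an arithmetic slip, so I would double-check the result by specializing to a case where $\chi(f^*T_X(tH+K))$ can be computed independently (e.g. $t=0$, or $S = \mathbb{P}^2$ with $f$ a linear embedding when $d=1$).
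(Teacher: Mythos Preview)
Your proposal is correct and uses the same underlying sequences as the paper: the defining sequence $0 \to T_S \to f^*T_X \to N_f \to 0$ together with the Euler sequence and the normal bundle sequence of $X \subset \PP^n$. The only difference is mechanical. You extract $c_1(T_X)$ and $c_2(T_X)$ from those sequences and then apply Hirzebruch--Riemann--Roch to the rank-$(n-1)$ bundle $f^*T_X(tH+K)$, whereas the paper's proof stays at the level of Euler characteristics throughout: it twists the pulled-back Euler sequence and the hypersurface sequence by $tH+K$ and reads off $\chi(f^*T_X(tH+K))$ as an alternating sum of $\chi$'s of \emph{line} bundles on $S$, namely $(n+1)\chi(\OO_S((t+1)H+K)) - \chi(\OO_S(tH+K)) - \chi(\OO_S((t+d)H+K))$. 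This buys the paper a slightly shorter computation (only the surface Riemann--Roch for divisors $\chi(\OO_S(D)) = \tfrac{1}{2}D(D-K)+1$ is needed, and one never has to expand $\mathrm{ch}$ of a higher-rank bundle or track $c_2(T_X)$), while your route is conceptually identical and equally valid.
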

\begin{proof}
This is a straightforward computation using the pullback of the Euler sequence on $\PP^n$ and twisting it with $tH+K$: 
$$ 0 \to \OO_S(tH+K) \to \OO((t+1)H+K)^{n+1} \to f^*T_{\PP^n}(tH+K) \to 0$$ and  the short exact sequence
$$ 0 \to f^*T_X(tH+K) \to f^*T_{\PP^n}(tH+K) \to \OO((d+t)H+K) \to 0.$$
\end{proof}

\begin{theorem}\label{euler-inequality}
With the same assumptions as in Proposition \ref{cor-hypersurface}, we have  
\begin{equation}\label{normal-inequality}
(2(n+1-d)(n+2-d)+n+1-d^2) \; H^2 + (3n-3d+5) \; H \cdot K -2K^2+24 \geq 0
\end{equation}
where  $K$ is the canonical divisor on $S$. 
\end{theorem}

\begin{proof}
This follows from comparing Corollary \ref{cor-eulerChar} with Lemma \ref{euler-char} when $t=n+2-d$.
\end{proof}

\begin{corollary} \label{cor-alphaLambda}
Let $\alpha$ be a fixed positive number and $\lambda$ be a number satisfying $1 > \lambda > 2-\sqrt{2}$. Then for sufficiently large $n$, a very general hypersurface of degree $d \geq \lambda n$ is not swept out by images of generically finite morphisms from a rational surface $S$ with $H \cdot K \leq \alpha H^2$ on $S$. 
\end{corollary}

To prove the corollary, we use Reider's theorem \cite{Reider}. 

\begin{theorem}{(Reider)} 
Let $X$ be a smooth projective surface and $L$ a nef divisor on $X$
with $L^2 \geq 5$. If $|L+K_X|$ has a base-point $x \in X$, then there is an effective divisor $D$ containing $x$ satisfying 
$$ L \cdot D =0, \;\;\; D^2=-1$$ 
or 
$$ L \cdot D=1, \;\;\;\; D^2=0.$$
\end{theorem}

\begin{proof}[Proof of Corollary \ref{cor-alphaLambda}] 
Suppose to the contrary that a very general hypersurface $X$ is covered by images of such morphisms. Blowing down $S$, we can further assume $f: S \to X$ does not contract any $(-1)$-curve.  
Applying Theorem \ref{euler-inequality} to the hypotheses given we conclude that there is a generically finite morphism $f: S \to X$ which does not contract any $(-1)$-curve and satisfies the inequality of Theorem \ref{euler-inequality}. Since $H$ is nef, applying Reider's theorem to $3H$ we see that $3H+K$ is base-point free. Therefore $(3H+K)^2\geq 0$, so $-2K^2 \leq 18H^2+12H \cdot K$.  
Since $d \geq \lambda b$ and $\lambda > 2-\sqrt{2}$, the coefficient of $H^2$ in \ref{normal-inequality} becomes arbitrarily negative compared to the coefficient of $H \cdot K$, so we get a contradiction.
\end{proof}

\begin{remark}
Unfortunately, there exist examples of rational surfaces containing divisors $H$ with $H^2$ small relative to $H \cdot K$.

Take a general pencil of degree $b$ curves in $\PP^2$, and let $S$ be the blowup of $\PP^2$ along the $b^2$ base points of the pencil. Let $H = (b+1)L - \sum_i E_i$, where the sum ranges over all of the exceptional divisors. Then $H$ is base-point free and big. Moreover, $H^2 = (b+1)^2 - b^2 = 2b+1$, while $H \cdot K = -3b + b^2 = b(b-3)$. Thus, $H \cdot K$ grows faster than $H^2$ as $b$ becomes large, and so we cannot hope to obtain a linear bound for $H \cdot K$ in terms of $H^2$.

For another example, consider Example 2 from \cite{CKLLMMT}. The authors describe a blowup $S$ of $\PP^2$ at 19 points together with a sequence of big and nef divisors $D_n$ such that $K_S \cdot D_n$ goes to infinity while $D_n^2 = 2$ for all $n$.
\end{remark}

\begin{corollary} \label{cor-delPezzos} If $X$ is a very general hypersurface in $\PP_{\CC}^n$ of degree $d > (2-\sqrt{2})n+3$,  then the images of generically finite morphisms from del Pezzo surfaces to $X$ cannot sweep out $X$.
\end{corollary}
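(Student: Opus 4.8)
The plan is to apply Theorem~\ref{euler-inequality} to families of del Pezzo surfaces and to show that the resulting inequality (\ref{normal-inequality}) cannot hold once $d > (2-\sqrt{2})n+3$. First I would reduce to a single family. Smooth del Pezzo surfaces fall into finitely many bounded families of smooth rational surfaces, one for each value of the degree $K^2 \in \{1,\dots,9\}$ (with two families in degree $8$). If the images of generically finite morphisms from del Pezzo surfaces swept out $X$, then, $X$ being irreducible, the images coming from one of these finitely many families $q\colon \mathcal S \to B$ would already sweep out $X$. Theorem~\ref{euler-inequality} would then produce a del Pezzo surface $S$ that is a fiber of $q$ and a generically finite morphism $f\colon S \to X$ for which (\ref{normal-inequality}) holds, and the goal is to derive a contradiction. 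We may assume $d \le n$, since otherwise a very general $X$ is not even uniruled.

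Next I would examine the coefficient of $H^2$ in (\ref{normal-inequality}). Writing $P = 2(n+1-d)(n+2-d)+n+1-d^2 = d^2 - (4n+6)d + 2n^2+7n+5$, its roots are $r_\pm = (2n+3) \pm \sqrt{2n^2+5n+4}$. A direct comparison gives $r_- < (2-\sqrt{2})n+3 < d \le n < r_+$ (the left inequality because $\sqrt{2n^2+5n+4} > \sqrt{2}\,n$), so $P<0$; I would write $P = -Q$ with $Q = (d-r_-)(r_+-d) > 0$. The same estimates bound $Q$ from below: on one hand $d - r_- > (2-\sqrt2)n+3 - r_- = \sqrt{2n^2+5n+4} - \sqrt2\,n > 1$ (equivalently $5n+3 > 2\sqrt2\,n$), and on the other hand $r_+ - d \ge r_+ - n = n+3+\sqrt{2n^2+5n+4} > (1+\sqrt2)n$, so $Q > (1+\sqrt2)n$. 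Since $d \le n$, the hypothesis $d > (2-\sqrt2)n+3$ forces $(\sqrt2-1)n > 3$, hence $n \ge 8$ and $Q > 8(1+\sqrt2) > 19$.

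Finally I would substitute back into (\ref{normal-inequality}). On a del Pezzo surface $-K_S$ is ample while $H$ is a nonzero effective divisor (pull back a general hyperplane section, which does not contain $f(S)$), so $H\cdot K \le -1$; moreover $H^2 \ge 1$, $K^2 \ge 1$, and $3n-3d+5 \ge 5$ because $d \le n$. As $Q>0$ and $H^2 \ge 1$, the left-hand side of (\ref{normal-inequality}) is at most
\[
-Q + (3n-3d+5)(H\cdot K) - 2K^2 + 24 \;\le\; -Q - 5 - 2 + 24 \;=\; 17 - Q \;<\; 0,
\]
contradicting Theorem~\ref{euler-inequality}. Hence no such family of del Pezzo surfaces can sweep out $X$.

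The step I expect to be the main obstacle is the numerical analysis in the middle paragraph: one must verify that the threshold $(2-\sqrt2)n+3$ is exactly what makes $P$ negative and, at the same time, that between the two roots $Q$ grows fast enough — at least linearly in $n$ — to absorb the constant $24$ together with the bounded contributions of $K^2$ and of $(3n-3d+5)H\cdot K$. The favorable feature is that the inequality $(2-\sqrt2)n+3 < d \le n$ already forces $n \ge 8$, so $Q > 19$ automatically and no separate treatment of small values of $n$ is required.
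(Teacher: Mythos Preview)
Your proof is correct and follows essentially the same route as the paper: apply Theorem~\ref{euler-inequality}, show the coefficient of $H^2$ in (\ref{normal-inequality}) is sufficiently negative under the hypothesis $d>(2-\sqrt2)n+3$, and then use the del Pezzo facts $H\cdot K<0$, $K^2>0$, $H^2\ge 1$ to force the left side negative. The only real difference is bookkeeping: the paper substitutes $n-d<(\sqrt2-1)n-3$ directly into the coefficient and simplifies to get $B<-5n-4$, whereas you compute the roots of the quadratic in $d$ and bound $-B$ via the factored form to obtain the weaker but still sufficient $Q>(1+\sqrt2)n$; your explicit reduction to a single bounded family and the case $d>n$ are steps the paper leaves implicit.
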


We remark that $2-\sqrt{2} \approx .59$, so the result holds for $d > \frac{3n}{5}+3$.

\begin{proof}
Suppose to the contrary that $X$ is covered by images of generically finite morphisms from del Pezzo surfaces. Then by Theorem \ref{euler-inequality} 
there exist a del Pezzo surface $S$ and a generically finite morphism $f: S \to X$  for which inequality (\ref{normal-inequality}) holds.  We show this is not possible. Let $B$ be the coefficient of $H^2$ in inequality  (\ref{normal-inequality}). We first show $B < -5n-4$. To see this note that since $d > (2-\sqrt{2})n+3$, $n-d < (\sqrt{2}-1)n-3$, so 
$$B < 2((\sqrt{2}-1)n-2)((\sqrt{2}-1)n-1)+n+1-((2-\sqrt{2})n+3)^2 \leq  -5n-4.$$ Since $S$ is a del Pezzo surface, $-K$ is effective, so $H \cdot K < 0$,
and $K^2 > 0$. So the left hand side of inequality (\ref{normal-inequality}) is at most 
$-5n-4-(3n-3d+5)-2+24$  which is negative since $d \leq n$ and $n \geq 3$. This gives a contradiction.
\end{proof}

Next we apply Theorem \ref{euler-inequality} to the blow-up of $\PP^2$ in general points. Recall the following conjecture of Harbourne 
and Hirschowitz (\cite{Harbourne} and \cite{Hirschowitz}).

\begin{conjecture}{(Harbourne-Hirschowitz)}
Let $S$ be the blow-up of $\PP^2$ at $k$ general points and $L$ a line bundle on $S$. Then $h^1(L) \neq 0$ if and only if there is a $(-1)$-curve 
$E$ in $S$ such that 
$$\deg(L|_E) \leq -2.$$
\end{conjecture}

It is known that the Harbourne-Hirschowitz conjecture holds for $m\leq 9$ \cite[Theorem 5.1]{Cilberto}.

\begin{corollary}
\label{cor-HarbourneHirshowitz}
Suppose $d > (2-\sqrt{2})n+4$, $n \geq 4$, and $X$ is a very general hypersurface of degree $d$ in $\PP_{\CC}^n$.  If the Harbourne-Hirschowitz Conjecture holds true, then the images of generically finite morphisms from blow-ups of $\PP^2$ in general points do not cover $X$.
\end{corollary}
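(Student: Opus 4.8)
The plan is to mimic the proof of Corollary \ref{cor-delPezzos}, replacing the del Pezzo-specific input with the Harbourne--Hirschowitz Conjecture. Suppose, for contradiction, that the images of generically finite morphisms from blow-ups of $\PP^2$ at general points cover $X$. By Theorem \ref{euler-inequality} there exist such a surface $S$, which we may take to be the blow-up of $\PP^2$ at $k \geq 0$ general points, and a generically finite morphism $f: S \to X$ for which inequality (\ref{normal-inequality}) holds. As in the proof of Corollary \ref{cor-alphaLambda}, we may assume $f$ contracts no $(-1)$-curve of $S$; in particular $H$ is big and base-point free (after possibly applying Reider's theorem as before, $3H+K$ is base-point free, which is enough for the needed positivity), so $H^2 > 0$ and $H \cdot (3H+K) \geq 0$.

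The key point where the conjecture enters is in controlling $K^2$. Since $S$ is the blow-up of $\PP^2$ at $k$ general points, $K^2 = 9 - k$, so $-2K^2 = 2k - 18$ can be large and positive when $k$ is large, which is exactly why the del Pezzo argument (where $k \leq 8$, so $K^2 > 0$) does not directly apply. To handle this, I would revisit the chain Corollary \ref{cor-eulerChar} $\Rightarrow$ Lemma \ref{euler-char} $\Rightarrow$ Theorem \ref{euler-inequality}, but using $D = (n+1-d)H$ (with no $K$) in Proposition \ref{vanishing-cohomology}, applied to $E = N_f$ and $A = H$. Here condition (c) asks for $H^1(\OO_S((n+1-d)H)) = 0$: since $n+1-d \geq 1$ and $H$ is effective and nef (it is base-point free and big), the bundle $(n+1-d)H$ is effective and nef, so the Harbourne--Hirschowitz Conjecture gives exactly this vanishing. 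Condition (b) requires $(n+1-d) H \cdot C \geq \deg N_f|_C = (n+1-d)H\cdot C - 2$ for a general fiber $C$, which holds. So Proposition \ref{vanishing-cohomology} yields $\chi(N_f((n+2-d)H)) \geq (n-3)\,\chi(\OO_S((n+2-d)H))$, and Riemann--Roch turns this into an inequality analogous to (\ref{normal-inequality}) but with the $t = n+2-d$, $K$-free specialization of Lemma \ref{euler-char}; crucially the $-K^2$ term now appears only through the $\chi(\OO_S)$ contributions on both sides and largely cancels, removing the dangerous $+2k$.

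Carrying out this computation, one obtains an inequality of the shape $B' \cdot H^2 + (\text{linear in } n,d)\, H \cdot K + (\text{constant}) \geq 0$, where $B'$ is the coefficient of $H^2$; the hypothesis $d > (2-\sqrt{2})n + 4$ (slightly stronger than the del Pezzo bound, to absorb the shifted constants) forces $B' < 0$ with $|B'|$ growing linearly in $n$. Since $H^2 \geq 1$, $H \cdot K$ can be bounded below using $H \cdot (3H+K) \geq 0$, i.e. $H \cdot K \geq -3H^2$, so the $H \cdot K$ term is bounded below by a constant multiple of $H^2$, and the whole left side becomes negative for large $n$ (and one checks the small cases directly from $d \leq n$), a contradiction.

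The main obstacle I expect is bookkeeping: verifying that dropping $K$ from $D$ in Proposition \ref{vanishing-cohomology} still leaves conditions (a), (b), (c) intact (condition (a), global generation of $N_f(H)$, is untouched since it only involves $A = H$), and then tracking constants carefully enough through the modified Corollary \ref{cor-eulerChar}/Lemma \ref{euler-char} comparison to see precisely why the bound improves to $(2-\sqrt{2})n+4$ rather than $(2-\sqrt{2})n+3$. A secondary subtlety is the reduction to $f$ contracting no $(-1)$-curve: blowing down a $(-1)$-curve keeps $S$ a blow-up of $\PP^2$ at general points only if the points remain general, which needs a brief remark, but this parallels the del Pezzo case.
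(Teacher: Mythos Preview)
Your approach has a genuine gap: the claim that dropping $K$ from the twist $D$ causes the $K^2$ term to cancel is incorrect. The $K^2$ contribution to $\chi(N_f(\text{twist}))$ does not come from $\chi(\OO_S)$ (which equals $1$ on a rational surface, with no $K^2$ in sight); it comes from subtracting $\chi(T_S(\text{twist}))$ in the sequence $0\to T_S\to f^*T_X\to N_f\to 0$, since $c_2(T_S)$ enters Riemann--Roch. A direct computation gives $\chi(T_S(tH+K))=t^2H^2+K^2-10$ but $\chi(T_S(tH))=t^2H^2-2tH\!\cdot\!K+2K^2-10$, so removing $K$ from the twist actually \emph{doubles} the $K^2$ coefficient. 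Carrying your comparison $\chi(N_f((n+2-d)H))\geq (n-3)\chi(\OO_S((n+2-d)H))$ through, one obtains (after multiplying by $2$)
\[
\bigl(2(n+1-d)(n+2-d)+n+1-d^2\bigr)H^2+(n+3-d)\,H\!\cdot\!K-4K^2+24\geq 0,
\]
which is strictly worse than (\ref{normal-inequality}): same $H^2$ coefficient, smaller $H\!\cdot\!K$ coefficient, and $-4K^2$ rather than $-2K^2$. There is also a second problem at the end: you bound $H\!\cdot\!K$ from below via $H\cdot(3H+K)\geq 0$, but since the $H\!\cdot\!K$ coefficient is positive and (as the Remark after Corollary \ref{cor-alphaLambda} shows) $H\!\cdot\!K$ can be arbitrarily large relative to $H^2$, it is an \emph{upper} bound on $H\!\cdot\!K$ that is needed.

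The paper proceeds quite differently. It keeps inequality (\ref{normal-inequality}) unchanged and uses the Harbourne--Hirschowitz Conjecture not for condition (c) of Proposition \ref{vanishing-cohomology}, but to force $H^1(\OO_S(H))=0$; Riemann--Roch then gives $\chi(\OO_S(H))\geq 0$, i.e.\ $H\!\cdot\!K\leq H^2+2$, the needed upper bound. To control $-2K^2$, the paper first invokes \cite[Theorem 2.3]{RY} to rule out images covered by pencils of lines (so $H^2\geq 2$), and then applies Reider's theorem to $2H$ (rather than $3H$) to make $2H+K$ base-point free, giving $(2H+K)^2\geq 0$ and hence $-2K^2\leq 8H^2+8H\!\cdot\!K\leq 16H^2+16$. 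Substituting both bounds into (\ref{normal-inequality}) yields a linear inequality in $H^2$ whose leading coefficient is negative once $d>(2-\sqrt{2})n+4$, and $H^2\geq 2$ then gives the contradiction.
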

\begin{proof}
Suppose to the contrary that a very general hypersurface $X$ of degree $d$ is covered by the images of generically finite morphisms from blow-ups of $\PP^2$ in general points. 
Then by Theorem \ref{euler-inequality}, there is a rational surface $S$ obtained by blowing up $\PP^2$ in general points and a generically finite morphism $f: S \to X$ such that inequality (\ref{normal-inequality}) is satisfied. 
We can assume $f$ does not contract any $(-1)$-curve since otherwise we can consider the induced morphism from the blow-down of $S$ to $X$ instead.
By the Harbourne-Hirschowitz conjecture, $H^1(S,\OO_S(H))=0$, so by the Riemann-Roch theorem, $H(H-K) =2\chi(H) -2\geq -2$. Therefore $H \cdot K \leq H^2+2$. 

By \cite[Theorem 2.3]{RY} if $n < \frac{d^2+3d+6}{6}$, then there is no rational curve on the space of lines in $X$. Our assumption on the degree implies this inequality is satisfied, so we can assume the image of $S$ under $f$ is not covered by
a pencil of lines. In particular, the image of $S$ has degree at least $2$ and so $H^2 \geq 2$. Since the image of $S$ is not covered by lines, and since $H$ is big and nef, by Reider's theorem \cite{Reider}, $2H+K$ is base-point free. Therefore $(2H+K)^2\geq 0$, so $-2K^2 \leq 8H^2+8H \cdot K \leq 16H^2+16$. 
So if we denote the left hand side of inequality \ref{normal-inequality} by $A$, then we have 
\begin{equation*}
\begin{split}
 A & \leq (2(n+1-d)(n+2-d)+n+1-d^2+(3n-3d+5)+16) \; H^2 + 2(3n-3d+5)  + 40. \\
 \end{split}
 \end{equation*}
 Denote the coefficient of $H^2$ in the above inequality by $B$. We claim $B < -(3n-3d+5)-20$. This is because by our assumption $n-d \leq (\sqrt{2}-1)n-4$, so 
\begin{equation*}
\begin{split}
 B + (3n-3d+5)+20  &< 2((\sqrt{2}-1)n-3)((\sqrt{2}-1)n-2)+n+1-((2-\sqrt{2})n+4)^2  \\
& \;\;\;\;\; +6((\sqrt{2}-1)n-4) +46 \\
& = (4\sqrt{2}-11)n+19\\
& <0,
 \end{split}
 \end{equation*}
 Where the last line follows from the assumption $n \geq 4$. Since $H^2 \geq 2$, we get $A <0$, a contradiction.

\end{proof}

\section{Morphisms from a fixed rational surface}
\label{sec-Onefixedsurface}
Here we consider maps from a fixed rational surface $S$. Let $S$ be a smooth rational surface, and let $\Hom^0(S,X)$ denote the open locus in $\Hom(S,X)$ parametrizing 
generically finite morphisms from $S$ to $X$. We say that $S$ \emph{strongly sweeps out} a variety $X$ if the natural map $\Hom^0(S,X) \times S \to X \times S$ given by $(f,p) \mapsto (f(p), p)$ is dominant. In other words, $X$ is strongly swept out by $S$ if for any pair of a general points $p \in S$ and $q \in X$, there is a generically finite morphism $S \to X$ sending $p$ to $q$.

\begin{proposition}
\label{lem-TxgenericallyGlobGen}
Suppose that $X$ is a very general hypersurface of degree $d$ in $\PP_{\CC}^n$ and $X$ is strongly swept out by $S$. Then there is $f \in \Hom^0(S,X)$ such that $f^* T_X$ is generically globally generated and $f^*T_X(H)$ is globally generated.
\end{proposition}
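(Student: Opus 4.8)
The plan is to mirror the two-step structure of the proof of Proposition~\ref{cor-hypersurface}, replacing the normal sheaf $N_f$ by the restricted tangent bundle $f^*T_X$ throughout, with the ``sweeping'' hypothesis strengthened to ``strong sweeping'' precisely so as to control the point $p\in S$ as well as the image.

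For the generic global generation of $f^*T_X$, I would argue as follows. Let $\cU = \Hom^0(S,X)\times S$, with its projections and the evaluation morphism $\cU\to X\times S$, $(f,p)\mapsto (f(p),p)$; by hypothesis this morphism is dominant, so by generic smoothness it is smooth at a general point $(f,p)$. Since the Zariski tangent space to $\Hom^0(S,X)$ at $[f]$ is $H^0(S,f^*T_X)$, the differential of $\cU\to X\times S$ at $(f,p)$ is the map $H^0(S,f^*T_X)\oplus T_{S,p}\to (f^*T_X)_p\oplus T_{S,p}$ sending $(v,w)$ to $(\ev_p(v)+df_p(w),\,w)$, whose second component is already surjective. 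Hence surjectivity of the full differential forces the evaluation map $\ev_p\colon H^0(S,f^*T_X)\to (f^*T_X)_p$ to be surjective, so for a general $f\in\Hom^0(S,X)$ the bundle $f^*T_X$ is globally generated at a general point of $S$, i.e.\ it is generically globally generated.

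For the global generation of $f^*T_X(H)$, I would reuse the $\PGL_{n+1}$-equivariant construction from the proof of Proposition~\ref{cor-hypersurface}: after a dominant base change $U\to V$ there is a family $\cX_U\to U$ of hypersurfaces, a family $\cY\to U$ of copies of $S$, and a $U$-morphism $\tilde f\colon\cY\to\cX_U$ restricting over a general $u$ to $f\colon S\to X$, all carrying a $\PGL_{n+1}$-action that makes $T_{\cY}\to\pi^*T_{\PP^n}$ surjective. Running the same diagram of relative tangent and conormal sequences on $\cX_U$, one identifies the relative tangent bundle in the $\PP^n$-direction with (a pullback, twisted by a line bundle from $U$ that restricts trivially to the fibers, of) the Lazarsfeld--Mukai bundle $M_d = \ker(S_d\otimes\OO_{\PP^n}\to\OO_{\PP^n}(d))$, and from this one extracts $f^*T_X(H)$ — via the pulled-back Euler sequence and the conormal sequence of $X\subset\PP^n$ in the family — as a quotient of a globally generated bundle built from $\pi^*M_1(1) = f^*\Omega_{\PP^n}(2)$. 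Since $M_1(1)=\Omega_{\PP^n}(2)$ is globally generated (take the second wedge power of the Euler sequence $0\to M_1\to S_1\otimes\OO_{\PP^n}\to\OO_{\PP^n}(1)\to 0$, so that $M_1(1)$ is a quotient of a trivial bundle) and since $M_d$ is itself a quotient of a direct sum of copies of $M_1$ by multiplication by general forms of degree $d-1$, it follows that $f^*T_X(H)$ is globally generated. Taking $f$ general in this family, the first part applies to it as well, which gives the statement.

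The main obstacle is the last reduction in the second step: a direct manipulation of $0\to f^*T_X\to f^*T_{\PP^n}\to\OO_S(dH)\to 0$ — for instance via the second wedge power — only yields global generation of $f^*T_X$ after a twist by roughly $(d-1)H$, which for $d\gg 0$ is far weaker than what we need. The gain down to a single twist by $H$ is exactly what the $\PGL_{n+1}$-equivariant family buys us, by forcing the relevant bundle to be built from the ``universal'' Lazarsfeld--Mukai bundle $M_d$ (whose twist $M_d(1)$ is globally generated for every $d$) rather than from the $n+1$ particular partials $\partial_iF$; the delicate point is to set up the relative sequences on $\cX_U$ so that $f^*T_X$, and not merely the quotient $N_f$ that appears in Proposition~\ref{cor-hypersurface}, is realized as a quotient of the globally generated twist.
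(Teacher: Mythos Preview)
Your first step (generic global generation of $f^*T_X$) is correct and matches the paper's argument.

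In the second step you have correctly located the obstacle but not removed it. Running the construction of Proposition~\ref{cor-hypersurface} verbatim with $\cY\to\cX_U$ produces global generation of $N_f(H)$, and you note that ``the delicate point is to set up the relative sequences on $\cX_U$ so that $f^*T_X$, and not merely the quotient $N_f$ \dots, is realized as a quotient of the globally generated twist.'' But you never carry this out. On $\cX_U$ the fiberwise restriction of $T_{\cX_U}$ contains $T_{X_u}$ as a \emph{sub}bundle (with trivial cokernel $T_{U,u}\otimes\OO$), not as a quotient, so the Lazarsfeld--Mukai analysis of $T_{\cX_U/\PP^n}\cong M_d$ does not by itself exhibit $f^*T_X(H)$ as a quotient of something globally generated. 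Your remark about the Euler/conormal sequences gives $f^*\Omega_X(2)$ as a quotient of $f^*\Omega_{\PP^n}(2)$, which is the wrong side of the duality.

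The paper's missing idea is a one-line trick: replace $\cX_U$ by $\cX_U\times S$ and replace $\tilde f$ by the graph map $U\times S\to\cX_U\times S$, $(u,s)\mapsto(f_u(s),s)$. Then the normal sheaf of $\tilde f$ restricted to the fiber over $u$ is the normal bundle of the graph $\Gamma_{f_u}\hookrightarrow X_u\times S$, which is exactly $f_u^*T_{X_u}$. With this substitution the argument of Proposition~\ref{cor-hypersurface} applies word for word (now to $T_{\cX_U\times S/\PP^n\times S}$) and yields global generation of $N_{\tilde f,\cX_U\times S}(H)$, hence of $f^*T_X(H)$ on a general fiber. This is precisely what converts the $N_f$-statement into an $f^*T_X$-statement, and it is the piece your proposal is missing.
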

\begin{proof}
Consider the map $\Homn(S,X) \times S \to X \times S$. Then for any $(f,p) \in \Hom^0(S,X) \times S$ we have the following diagram.

\catcode`\@=11
\newdimen\cdsep
\cdsep=3em

\def\cdstrut{\vrule height .25\cdsep width 0pt depth .12\cdsep}
\def\@cdstrut{{\advance\cdsep by 2em\cdstrut}}

\def\arrow#1#2{
  \ifx d#1
    \llap{$\scriptstyle#2$}\left\downarrow\cdstrut\right.\@cdstrut\fi
  \ifx u#1
    \llap{$\scriptstyle#2$}\left\uparrow\cdstrut\right.\@cdstrut\fi
  \ifx r#1
    \mathop{\hbox to \cdsep{\rightarrowfill}}\limits^{#2}\fi
  \ifx l#1
    \mathop{\hbox to \cdsep{\leftarrowfill}}\limits^{#2}\fi
}
\catcode`\@=12

\cdsep=3em

$$
\begin{matrix}
 T_{\Homn(S,X) \times S, (p,f)} & \arrow{r}{\alpha} & T_{X \times S, (f(p),p)}  \cr
 \arrow{d}{\pi_1} & & \arrow{d}{\pi_2} \cr
 T_{\Hom^0(S,X),f} = H^0(f^*T_X) & \arrow{r}{\beta} & T_{X,f(p)}
\end{matrix}
$$

Let $(p,f)$ be a general point of an irreducible component of $\Hom^0(S,X) \times S$ which dominates $X \times S$. Then if $\mathcal{U}_{red}$ is the largest reduced subscheme of $\Hom^0(S,X)$, generic smoothness shows that $\alpha|_{T_{\mathcal{U}_{red},(p,f)}}$ is surjective, and hence $\pi_2 \circ \alpha$ is surjective. It follows that $\beta$ is surjective, and hence, $f^*T_X$ is generically globally generated. 

To prove $f^*T_X(H)$ is globally generated, we repeat the argument of Proposition \ref{cor-hypersurface} with $\cX$ replaced by $\cX \times S$. We can find a dominant 
morphism $U \to H^0(\OO_{\PP^n}(d))$ and a morphism $\psi: U \times S \to \cX_U \times S$ such that if $\pi': U \times S \to \PP^n$ is the natural morphism, then the induced map on tangent spaces $T_{U \times S} \to \pi'^* T_{\PP^n\times S}$ is surjective.  The same argument as in Proposition \ref{cor-hypersurface} shows that $T_{\cX_U \times S / \PP^n \times S}(H)$ is globally generated, and therefore $N_{\psi, \cX_U \times S}(H)$ is globally generated. if $f: S \to X$ is the restriction of $\psi$ to $\{u\}\times S$ for a general point $u$ of $U$, then $N_{\psi, \cX_U \times S}|_{u \times S} = N_{f, X \times S} = f^*T_X$, the desired  result follows. 
\end{proof}

\begin{proposition} \label{char-p}
Suppose $S$ is a fixed rational surface.  If $d > (2-\sqrt{2})n+2$ and $X$ is a very general hypersurface of degree $d$ in $\PP_{\CC}^n$, 
then $X$ is not strongly swept out by $S$. 
\end{proposition}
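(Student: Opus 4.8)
The plan is to mimic the strategy of Section \ref{sec-Familiesofsurfaces}, but applied to the restricted tangent bundle $f^*T_X$ rather than to the normal sheaf $N_f$, and to run the argument after reduction mod $p$ so that the Frobenius can be exploited. Suppose for contradiction that $X$ is very general of degree $d > (2-\sqrt{2})n + 2$ and is strongly swept out by the fixed rational surface $S$. By Proposition \ref{lem-TxgenericallyGlobGen}, there is $f \in \Homn(S,X)$ with $f^*T_X$ generically globally generated and $f^*T_X(H)$ globally generated. The first step is to produce a lower bound on the Euler characteristic of a suitable twist of $f^*T_X$: applying Proposition \ref{vanishing-cohomology} with $E = f^*T_X$ (which has rank $n-1$), $A = H$, and $D = (n+1-d)H + K$ — after blowing up $S$ so that it admits a fibration $\pi: S \to \PP^1$ with general fiber $\PP^1$ — gives
\[
\chi\big(f^*T_X((n+2-d)H + K)\big) \;\geq\; (n-1)\,\chi\big(\OO_S((n+2-d)H+K)\big),
\]
where condition (b) holds because $\deg f^*T_X|_C = (n+1-d)H\cdot C$ on a general fiber $C$ (here I use $f^*T_X|_C$ has degree $(n+1-d)H\cdot C - 2 + 2$, i.e. one computes the degree from the Euler sequence and the defining equation, which is at most $(n+1-d)H\cdot C$ when $d \le n$), and condition (c) from Kawamata–Viehweg vanishing. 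On the other hand, a direct Riemann–Roch computation — entirely analogous to Lemma \ref{euler-char}, using the Euler sequence twisted by $tH + K$ — gives an exact formula for $\chi(f^*T_X(tH+K))$ as a quadratic polynomial in $t$ with coefficients linear in $H^2$, $H\cdot K$, $K^2$.

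The second step is to combine these two facts at $t = n+2-d$ to obtain an inequality of the shape
\[
c_2(n,d)\,H^2 + c_1(n,d)\,H\cdot K + c_0(n,d)\,K^2 + (\text{const}) \;\geq\; 0,
\]
where the leading coefficient $c_2(n,d)$ becomes very negative once $d > (2-\sqrt{2})n + 2$ — this is the same phenomenon that drives Theorem \ref{euler-inequality} and Corollaries \ref{cor-delPezzos} and \ref{cor-HarbourneHirshowitz}, but with $n-1$ in place of $n-3$ and with the numerics of $f^*T_X$ rather than $N_f$. The point of the degree threshold $(2-\sqrt{2})n+2$ (versus $+3$ or $+4$ in the earlier corollaries) is precisely that $f^*T_X$ has rank $n-1$ and degree $(n+1-d)H$, so the crossover occurs slightly earlier. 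To turn this inequality into a contradiction for an \emph{arbitrary} fixed rational surface $S$ — where we have no a priori control on whether $H\cdot K$ is small relative to $H^2$, as the remark after Corollary \ref{cor-alphaLambda} warns — we will need the extra leverage coming from characteristic $p$.

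This is where the techniques from \cite{CoskunRiedl} enter, and this is the step I expect to be the main obstacle. Reducing $X$, $S$, and $f$ mod a large prime $p$, one composes $f$ with the $e$-th power of Frobenius $F^e: S \to S$; the key fact (from \cite{CoskunRiedl}) is that for suitable congruence conditions on $p$, the bundle $(F^e \circ f)^* T_X = (F^e)^* f^*T_X$ acquires many global sections — equivalently, one gains control of $H^0$ and hence of $H^1$, $H^2$ through Serre duality and the Euler characteristic — and in particular one can force the relevant globally-generated-twist hypothesis to hold while the numerical invariants $H^2, H\cdot K, K^2$ scale by powers of $p$. Running the Euler characteristic inequality for $F^e \circ f$ and letting $e \to \infty$, the quadratic term $c_2(n,d)\,p^{2e}H^2$ dominates; since $c_2(n,d) < 0$ and $H^2 > 0$ (as $H$ is big, $f$ being generically finite, after blowing down any $(-1)$-curves $f$ contracts), the left-hand side goes to $-\infty$, contradicting the lower bound. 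The technical care needed is: (i) checking the Frobenius pullback still satisfies hypotheses (a)–(c) of Proposition \ref{vanishing-cohomology} — global generation of $(F^e)^*f^*T_X(H)$ is not automatic and must be arranged via \cite{CoskunRiedl}, perhaps at the cost of replacing $H$ by $p^e H$ in the twist; (ii) ensuring the fibration $\pi: S \to \PP^1$ and the vanishing $H^1(\OO_S(D)) = 0$ survive base change and Frobenius twist, which follows since these are open/numerical conditions; and (iii) tracking the constant terms carefully enough to see that only the $H^2$-coefficient matters asymptotically. Once these are in place, the contradiction closes the proof.
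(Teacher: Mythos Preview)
Your overall strategy---apply Proposition \ref{vanishing-cohomology} to a Frobenius pullback of $f^*T_X$ so that the $H^2$-term dominates as the Frobenius power grows---is exactly the paper's approach. However, your proposal has a real misidentification at its center: \cite{CoskunRiedl} plays \emph{no role} in this proof. That reference concerns towers of induced subvarieties in the universal hypersurface and is used only in the corollaries \emph{after} Proposition \ref{char-p}. The Frobenius step here is far more elementary than you fear: if $f_p^*T_{X_p}(H)$ is globally generated, then its Frobenius pullback $(f_p^*T_{X_p})^{(p)}(pH)$ is automatically globally generated (pullback of a surjection from a trivial bundle is a surjection from a trivial bundle), and likewise $(f_p^*T_{X_p})^{(p)}$ is generically globally generated. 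So your worry (i) dissolves once you take $A = pH$; no congruence conditions or extra input is needed. One then sets $E=(f_p^*T_{X_p})^{(p)}$, $A=pH$, $D=p(n+2-d)H+K$, and compares $\chi(E(A+D))$ at $t=n+3-d$; letting $p\to\infty$ kills the $H\cdot K$ and constant terms and leaves the inequality $n(t+1)^2+2t+1-(d+t)^2\ge (n-1)t^2$, which fails exactly when $d>(2-\sqrt2)n+2$.

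Two further technical corrections. First, your characteristic-zero warm-up with $D=(n+1-d)H+K$ does not actually satisfy condition (b): $\deg f^*T_X|_C=(n+1-d)H\cdot C$ while $\deg D|_C=(n+1-d)H\cdot C-2$, so the inequality fails by $2$. This is why the paper shifts $D$ up by one more multiple of $pH$ (making $A+D=p(n+3-d)H+K$): condition (b) then becomes $pH\cdot C\ge 2$, which holds for $p\ge 2$. Second, Kawamata--Viehweg vanishing can fail in positive characteristic, so condition (c) is not handled by your argument in (ii); the paper instead invokes Mukai's positive-characteristic vanishing result \cite[Theorem 3]{Mukai} for rational surfaces to get $H^1(\OO_{S_p}(p(n+3-d)H+K))=0$.
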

\begin{proof}

Assume to the contrary that $X$ is strongly swept out by $S$. Then by Proposition \ref{lem-TxgenericallyGlobGen}, there is $f:S \to X$ such that 
$f^*T_X$ is generically globally generated and 
$f^*T_X(H)$ is globally generated. Passing to finite characteristic, we can assume there is a sufficiently large prime number $p$, a rational surface 
$S_p$, a smooth hypersurface $X_p$ of degree $d$, and a morphism $f_p:S_p\to X_p$ all defined over an algebraically closed field of characteristic $p$ such that $f_p^*T_{X_p}$ is generically globally generated and 
$f_p^*T_{X_p}(H)$ is globally generated.  

For a coherent sheaf $F$ on $S_p$, let $F^{(p)}$ denote the pullback of $F$ under the absolute Frobenius morphism of $X$ (see 
\cite[Proposition 6.1]{Hartshorne}.) 
Since $f_p^*T_{X_p}(H)$ is globally generated, $(f_p^*T_{X_p})^{(p)}(pH)$ is also globally generated. Since $f_p^*T_{X_p}$ is generically globally generated, $(f_p^*T_{X_p})^{(p)}$ will be as well. 
Let $K$ denote the canonical divisor of $S_p$. A similar computation as in Lemma \ref{euler-char} shows that
\begin{equation}\label{euler-charp}
 \chi(f_p^*T_{X_p}^{(p)}(tpH+K)) = \frac{1}{2}(p^2(n(t+1)^2+2t+1-(d+t)^2) H^2 + p(nt+n+1-d-t)HK) +(n-1).
 \end{equation}

We can now apply Proposition \ref{vanishing-cohomology} with $E=f_p^*T_{X_p}^{(p)}$, $A =pH$,  and $D=p(n+2-d)H+K$ to get a contradiction. Since $S_p$ is a rational surface, by
 \cite[Theorem 3]{Mukai}, $H^1(D+pH)=0$, so condition (c) of Proposition \ref{vanishing-cohomology} is satisfied. After possibly blowing up $S_p$ at a point, we can assume there is a morphism $S_p \to \PP^1$ whose general fibers are smooth rational curves. If $C$ denotes a general fiber of this morphism, then $\deg (f_p^*T_{X_p}^{(p)})|_C = p(n+1-d) H \cdot C  \leq p(n+2-d)H\cdot C -2$, so 
condition (b) is also satisfied. Therefore, 
\begin{equation*}
\begin{split}
 \chi(f_p^*T_{X_p}^{(p)}((n+3-d)pH+K)) & \geq (n-1) \chi (\OO_{S_p}((n+3-d)pH+K))\\
 & = \frac{1}{2}(p^2(n-1)(n+3-d)^2 H^2+p(n-1)(n+3-d)HK)+ (n-1).
 \end{split}
 \end{equation*}
Comparing the coefficients of $H^2$ in this equation and Equation (\ref{euler-charp}) when $t=n+3-d$, and letting $p$ increase, we see if the above inequality holds then we must have 
$$n(t+1)^2+2t+1-(d+t)^2 \geq (n-1)(n+3-d)^2.$$
Letting $t=n+3-d$, this gives
$$n+(n+3-d)^2+2(n+3-d)(n+1)+1-(n+3)^2 \geq 0.$$
But since we assume $d > (2-\sqrt{2})n+2$, we have $n-d < (\sqrt{2}-1)n-2$, so the left hand side of the above inequality is smaller than 
$$ n+((\sqrt{2}-1)n+1)^2+2((\sqrt{2}-1)n+1)(n+1)+1-(n+3)^2 = -5-(6-4\sqrt{2})n < 0,$$
a contradiction. 
\end{proof}

We now implement a technique from \cite{CoskunRiedl} to show that a very general $X$ admits no generically finite morphisms from $S$. We recall some terminology. Let $\cU_{n,d}$ be the space of pairs $(p,X)$ where $X$ is a degree $d$ hypersurface in $\PP^n$ and  $p \in X \subset \PP^n$. Given a subset $\cB_r \subset \cU_{r,d}$, we let the \emph{tower of induced varieties} of $\cB_r$ be defined inductively by $\cB_{j+1} \subset \cU_{j+1,d}$ is the set of pairs $(p,X)$ such that some linear section of the pair $(p,X)$ is in $\cB_j$, $j \geq r$.

In our setting, we are interested in proving that $\cB_n$ has high codimension in $\cU_{n,d}$, from which it will follow that a very general hypersurface will contain no points of $\cB_n$. The tool we use is the following.

\begin{theorem}[Theorem 4.8 from \cite{CoskunRiedl}]
\label{thm-CoskunRiedl}
Let $\cB_r \subset \cU_{r,d}$ be an integral, $PGL_{r+1}$-invariant subvariety, and let $\cB_{n}, n \geq r,$ be the tower of induced subvarieties of $\cB_r$. Then if $\cB_m$ is not dense in $\cU_{m,d}$ for some $m > r$, either: \begin{enumerate}
\item $\codim \cB_n \subset \cU_{n,d}$ is at least $2(m-n)+1$ for every $r \leq n \leq m$, or
\item There is some $\cB_1 \subset \cU_{1,d}$ such that $\cB_n$ is in the closure of the tower of induced subvarieties of $\cB_1$, or
\item $\cB_{m,d}$ is the space of pairs $(p,X)$ with $p$ contained in a line $\ell$ lying in $X$.
\end{enumerate}
\end{theorem}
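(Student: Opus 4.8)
The plan is to run a descending induction on $n$, from $n=m$ down to $n=r$, built on a single codimension identity relating consecutive levels of the tower.

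For each $n\ge r$, introduce the incidence variety
$$\cC_{n+1}=\{\,((p,X),H):(p,X)\in\cU_{n+1,d},\ H\subset\PP^{n+1}\text{ a hyperplane},\ p\in H,\ (p,X\cap H)\in\cB_n\,\},$$
which is well defined since $\cB_n$ is $\PGL$-invariant, so membership of $(p,X\cap H)$ in $\cB_n$ is independent of the identification $H\cong\PP^n$. The first projection $\cC_{n+1}\to\cB_{n+1}$ is surjective by the definition of the induced variety; let $\delta_{n+1}\in\{0,1,\dots,n\}$ be the dimension of its general fiber, i.e. the dimension of the family of hyperplanes $H\ni p$ with $(p,X\cap H)\in\cB_n$ for a general point $(p,X)$ of $\cB_{n+1}$. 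Fibering $\cC_{n+1}$ instead over the flag $\{(p,H):p\in H\}$ (of dimension $2n+1$), and using that restricting degree $d$ forms from $\PP^{n+1}$ to a hyperplane is a linear projection with $\binom{n+d}{d-1}$-dimensional fibers, together with the fact that $\cB_n\to\PP^n$ is surjective with fibers of dimension $\dim\cB_n-n$ (equidimensional by $\PGL$-invariance), one gets $\dim\cC_{n+1}=n+1+\dim\cB_n+\binom{n+d}{d-1}$. Comparing the two computations of $\dim\cC_{n+1}$, and using $\dim\cU_{n,d}=\binom{n+d}{d}+n-2$ together with Pascal's identity, everything collapses to
$$\codim\bigl(\cB_{n+1}\subset\cU_{n+1,d}\bigr)=\codim\bigl(\cB_n\subset\cU_{n,d}\bigr)+\delta_{n+1}-n.$$

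Now read off the trichotomy. Since $\cB_m$ is not dense, $\codim\cB_m\ge 1$. If $\delta_{n+1}\le n-2$ for every $n$ with $r\le n\le m-1$, the recursion gives $\codim\cB_n\ge\codim\cB_{n+1}+2$ at each step, and descending induction yields $\codim\cB_n\ge 2(m-n)+1$ for all $r\le n\le m$, which is alternative (1). Otherwise $\delta_{n+1}\ge n-1$ at some level, and one must show this forces alternative (2) or (3). The point is that such a large family of good hyperplanes is rigid and descends through the tower: when $\delta_{n+1}=n$, \emph{every} hyperplane through $p$ is good, and when $\delta_{n+1}=n-1$ the good hyperplanes form a codimension-one subfamily of the $\PP^n$ of hyperplanes through $p$ which — by analyzing how $(p,X\cap H)$ varies with $H$ — one shows is exactly the set of hyperplanes containing a distinguished line $\ell=\ell(p,X)$ through $p$; moreover the same picture is inherited by a general section $(p,X\cap H)\in\cB_n$, with the same line. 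Iterating down to the bottom of the tower, the $\delta=n$ branch exhibits $\cB_n$ inside the tower induced by some $\cB_1\subset\cU_{1,d}$, which is alternative (2), while the $\delta=n-1$ branch yields a line $\ell$ through $p$ all of whose iterated linear sections remain in the tower, and pushing this down to level $1$ (or $2$) forces $\ell\subset X$, which is alternative (3).

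The first step is routine dimension bookkeeping once the incidence variety and its two fibrations are set up. The real difficulty — and where I expect essentially all the work to lie — is the rigidity-and-propagation analysis: proving that a codimension $\le 1$ family of good hyperplanes at one level must be of the ``hyperplanes through a fixed line'' type, that this structure descends level by level, and that at the base of the tower it is witnessed either by a genuine line on $X$ or by a level-one origin of the whole tower. This is precisely the content of the cited result, and the $\delta=n-1$ case in particular seems to require understanding the Fano scheme of lines through a general point of $X$.
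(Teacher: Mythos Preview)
This theorem is not proved in the paper at all: it is quoted verbatim as Theorem 4.8 of \cite{CoskunRiedl} and used as a black box, so there is no proof in the paper to compare your proposal against. Your assignment here is therefore to reproduce an argument from an external reference, not to fill in something the authors omitted.

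That said, your sketch is in the right spirit. The incidence correspondence $\cC_{n+1}$ and the resulting recursion $\codim\cB_{n+1}=\codim\cB_n+\delta_{n+1}-n$ are set up correctly (your dimension count checks out), and the descent yielding alternative (1) when every $\delta_{n+1}\le n-2$ is fine. The substantive gap is exactly where you flag it: the treatment of the cases $\delta_{n+1}\in\{n-1,n\}$ is only gestured at. In particular, the claim that a codimension-one family of good hyperplanes through $p$ must be ``the hyperplanes containing a fixed line through $p$'' is not automatic---a priori it could be any hypersurface in the $\PP^n$ of hyperplanes through $p$---and the assertion that this structure propagates down the tower with the \emph{same} line needs an actual argument. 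Likewise, the passage from $\delta_{n+1}=n$ to a level-one origin $\cB_1$ requires more than iteration: you need to know something about how $\cB_n$ itself arises from lower levels, not just that every hyperplane section lands in $\cB_n$. For a complete proof you would need to consult \cite{CoskunRiedl} directly; your outline captures the skeleton but not the connective tissue.
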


We wish to show that in our setting, we need only consider case (1). 

\begin{corollary}
\label{cor-towerInducedVars}
Let $\cB_r \subset \cU_{r,d}$ be an integral, $PGL_{r+1}$-invariant subvariety, and let $\cB_{n,d}$ be the tower of induced subvarieties of $\cB_r$ for $n \geq r$. Then if $\cB_m$ is not dense in $\cU_{m,d}$ for some $m \geq d+1 \geq r$, $\codim \cB_n \subset \cU_{n,d}$ is at least $2(m-n)+1$.
\end{corollary}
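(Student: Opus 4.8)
The plan is to run our hypotheses through Theorem~\ref{thm-CoskunRiedl} and show that, when $m\ge d+1\ge r$, neither alternative (2) nor alternative (3) can occur; alternative (1) is then forced, and it is exactly the codimension bound being claimed. (As $m\ge d+1\ge r$ gives $m\ge r$, with $m=r$ vacuous, we may assume $m>r$.) Alternative (3) asserts that $\cB_m$ is the locus of $(p,X)\in\cU_{m,d}$ with $p$ on a line $\ell\subseteq X$; this locus is closed, being the image of the incidence $\{(p,\ell,X):p\in\ell\subseteq X\}$, which is proper over $\cU_{m,d}$. Because $m\ge d+1$ the hypersurface $X$ is Fano of index at least $2$, and the classical parameter count gives that the Fano scheme of lines of a general degree $d$ hypersurface $X\subseteq\PP^m$ has the expected dimension $2m-3-d$, with its universal line dominating $X$; so a general point of a general $X$ lies on a line of $X$, and the locus in (3) is dense in $\cU_{m,d}$. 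That contradicts $\cB_m$ not being dense, so we are not in case (3).

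The main work is to eliminate alternative (2), where $\cB_n$ sits in the closure of the tower of induced subvarieties, started at level $1$, of some $\cB_1\subseteq\cU_{1,d}$ — which we may take $PGL_2$-invariant, as it is produced from the $PGL$-invariant tower $\cB_\bullet$ by iterated linear sections. I would first note that, since a flag of linear subspaces through $p$ is determined for this purpose by its line, the level-$n$ term of the tower of $\cB_1$ is just $\{(p,X)\in\cU_{n,d}:(p,X\cap\ell)\in\cB_1\text{ for some line }\ell\ni p\}$, so membership is a condition on the single degree $d$ divisor $X$ cuts on one line through $p$. If $\cB_1=\cU_{1,d}$ this term is all of $\cU_{n,d}$, contradicting $\cB_m$ not being dense; so $\cB_1$ is a proper, nonempty, $PGL_2$-invariant subvariety, and $\dim\cB_1\le d-1$ since $PGL_2$ has no fixed point on $\cU_{1,d}$. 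The hypothesis $m\ge d+1$ now supplies enough room among lines through a point: for a general $(p,X)\in\cU_{m,d}$ the lines through $p$ form a $\PP^{m-1}$, and for general $X$ the assignment $\ell\mapsto X\cap\ell$ dominates the moduli of a degree $d$ divisor on $\PP^1$ with a marked point (the quotient $\cU_{1,d}/PGL_2$, of dimension $\max(0,d-3)$); hence the lines $\ell\ni p$ with $(p,X\cap\ell)\in\cB_1$ form a family of dimension at least $m-\codim_{\cU_{1,d}}\cB_1-1>0$, in particular nonempty. So the level-$m$ term of the tower of $\cB_1$ is dense in $\cU_{m,d}$, contradicting $\cB_m$ not being dense. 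Thus we are not in case (2), and Theorem~\ref{thm-CoskunRiedl} leaves only alternative (1).

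The hardest point is the elimination of (2). There one must (i) check that in alternative (2) one may take the tower of $\cB_1$ to be itself not dense — e.g.\ because $\cB_n$ equals it, or $\cB_1$ is chosen minimal — so that its becoming dense is a genuine contradiction, matching the precise wording of \cite{CoskunRiedl}; and (ii) prove the transversality statement that the divisors $X\cap\ell$ cut by lines through a general point of a general degree $d$ hypersurface in $\PP^m$ dominate $\cU_{1,d}/PGL_2$ for $m\ge d+1$ — equivalently, meet every nonempty $PGL_2$-invariant subvariety of $\cU_{1,d}$; the small cases $d\le 3$ (where $\cU_{1,d}/PGL_2$ is a point) should be handled directly, e.g.\ via lines tangent to $X$ at $p$. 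The elimination of (3) is routine, amounting to the classical fact that a general Fano hypersurface of index at least $2$ is swept out by lines.
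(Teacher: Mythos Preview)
Your overall plan---apply Theorem~\ref{thm-CoskunRiedl} and rule out alternatives (2) and (3)---is exactly what the paper does, and your treatment of case (3) via lines sweeping out a general Fano hypersurface of index at least $2$ matches the paper's.

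The difference is in how you eliminate case (2). The paper does \emph{not} attempt your dominance/transversality statement (ii). Instead it argues by dichotomy on the structure of $\cB_1$: either a general element of $\cB_1$ is a pointed divisor with at least two distinct points, in which case Proposition~4.10(1) of \cite{CoskunRiedl} already shows the tower of $\cB_1$ is dense; or every element of $\cB_1$ is supported at one point, forcing $\cB_1$ to be (the closure of) the $d$-fold point orbit. In this last case the paper computes directly: expanding $F$ about $p$ as $F_1+\cdots+F_d$, the lines through $p$ meeting $X$ to order $d$ are cut out by $F_1=\cdots=F_{d-1}=0$ in $\PP^{m-1}$, which is nonempty once $m\ge d$. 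This explicit calculation is the paper's substitute for your transversality claim, and it sidesteps the issue entirely.

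Your route is not wrong in spirit, but the gap you flag in (ii) is real: dominance of $\ell\mapsto(p,X\cap\ell)$ onto the moduli is what is needed to guarantee the preimage of an arbitrary proper $PGL_2$-invariant $\cB_1$ is nonempty, and this requires an argument (e.g.\ a differential computation showing the map from $\PP^{m-1}$ is surjective for general $X$). The paper's dichotomy avoids proving dominance in general by reducing to the single ``hardest'' $\cB_1$---the $d$-fold point orbit---where the preimage can be written down explicitly as a complete intersection. Your concern (i) is resolved by the precise formulation in \cite{CoskunRiedl}, where in alternative (2) the tower of $\cB_1$ actually contains $\cB_n$, so its becoming dense does contradict $\cB_m$ not dense.
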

\begin{proof}
First observe that for $m > d+1$, the space of lines in any degree $d$ hypersurface in $\mathbb{P}^n$ will sweep out $X$, so case (3) of Theorem \ref{thm-CoskunRiedl} cannot occur. 
We next show that case (2) cannot occur. Let $\cB_1 \subset \cU_{1,d}$ and let $\cB_n \subset \cU_{n,d}$ be the tower of induced varieties. It suffices to show $\cB_{d+1}$ is dense in $\cU_{d+1,d}$. If a general element of $\cB_1$ has at least 2 distinct points, it follows from \cite{CoskunRiedl} Proposition 4.10(1) that $\cB_n$ is dense in $\cU_{n,d}$, which contradicts the hypothesis. Thus, it remains to consider the cases where $\cB_1$ consists of $d$-fold points, and show that the tower of induced varieties is dense in $\cU_{m,d}$. To see this, let $(p,V(F)) \in \cU_{n,d}$ be a general point. Expand the equation of $F$ around $p$ to get $F = F_1+\cdots F_d$, where $F_i$ is the $i$th order part of $F$ near $p$. Then the space of lines in $\PP^n$ meeting $V(F)$ to order $d$ near $p$ is $V(F_1,\dots,F_{d-1})$. This is nonempty for $d \leq n-1$, so the result follows. 

\end{proof}

Using this result, we can prove a series of results about non-existence of rational surfaces in a very general hypersurface $X$, using our previous work.

\begin{theorem}
\label{thm-noSurfaces}
Let $d, n$ and $c$ be integers with $d < n+c$. If a very general hypersurface of degree $d$ in $\PP_{\CC}^{n+c}$ is not swept out by rational surfaces from a certain class (e.g., images of generically finite morphisms from Hirzebruch surfaces), then for a very general hypersurface $X$ of degree $d$ in $\PP^n$, surfaces in this class sweep out a subvariety of $X$ of codimension at least $2c+1$. In particular, if $c \geq \frac{n-3}{2}$, then $X$ contains no surfaces in this class.
\end{theorem}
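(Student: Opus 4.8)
The plan is to realize the locus swept out by the surfaces in the given class --- stratified by the degree of the image surface --- as a finite union of towers of induced subvarieties, and then to feed the non-sweeping hypothesis into Corollary~\ref{cor-towerInducedVars}. For each $N$ and each integer $e\ge 1$, let $\cB^{(e)}_N\subseteq\cU_{N,d}$ be the closure of the set of pairs $(p,Y)$ such that $p$ lies on the image $f(S)$ of some generically finite morphism $f\colon S\to Y$ with $S$ in the class and $\deg f(S)\le e$; this is a $\PGL_{N+1}$-invariant subvariety, and the swept-out locus at level $N$ is $\bigcup_e\cB^{(e)}_N$. I would introduce this stratification precisely so that each piece is a genuine tower. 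The elementary input is that an irreducible surface of degree $e$ in $\PP^m$ is nondegenerate only when $m\le e+1$; hence for $N\ge e+2$ any $(p,Y)\in\cB^{(e)}_N$ admits a hyperplane $\PP^{N-1}\ni p$ containing its image surface, so that $(p,Y\cap\PP^{N-1})\in\cB^{(e)}_{N-1}$, while conversely a surface contained in a hyperplane section of $Y$ still lies in $Y$. It follows that, for all $N\ge e+1$, $\cB^{(e)}_N$ is exactly the closure of the tower of induced subvarieties of $\cB^{(e)}_{e+1}$, and the same statement holds after passing to an irreducible $\PGL$-invariant component.

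Next I would fix, for a given $e\le d$, an irreducible $\PGL$-invariant component $\cB$ of $\cB^{(e)}_{n+c}$; by the previous step it is the tower of induced subvarieties of a component $\cB_r$ of $\cB^{(e)}_{e+1}$, based at level $r=e+1\le d+1$. Since a very general degree-$d$ hypersurface in $\PP^{n+c}$ is not swept out, the swept-out locus at level $n+c$ is not dense in $\cU_{n+c,d}$, and hence neither is $\cB$. Corollary~\ref{cor-towerInducedVars} then applies with $m=n+c$ (note $m\ge d+1\ge r$) and gives $\codim(\cB\cap\cU_{j,d})\ge 2(n+c-j)+1$ for every $r\le j\le n+c$. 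Taking $j=n$, which lies in this range provided $e<n$, yields codimension at least $2c+1$. Since only finitely many $e$ with $e\le d$ occur and each contributes a stratum of codimension $\ge 2c+1$, I conclude that on a very general degree-$d$ hypersurface $X$ in $\PP^{n}$ the surfaces in the class of image degree at most $d$ sweep out a subvariety of codimension at least $2c+1$.

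The final assertion is then a dimension count: such a subvariety has dimension at most $(n-1)-(2c+1)=n-2c-2$, which is strictly less than $2$ exactly when $2c\ge n-3$, i.e.\ $c\ge\frac{n-3}{2}$. As a generically finite morphism from a surface in the class has two-dimensional image, which would have to lie in this locus, no such morphism can exist, and $X$ contains no surface in the class.

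The step I expect to be the real obstacle is the treatment of morphisms whose image surface has large degree --- roughly $\deg f(S)\ge n$. Such a surface can be nondegenerate in $\PP^n$, so the corresponding stratum is not manifestly a tower based at a level $\le d+1$ and Corollary~\ref{cor-towerInducedVars} does not apply verbatim. To cover it one must invoke Theorem~\ref{thm-CoskunRiedl} directly, taking the base at level $\deg f(S)+1$, and exclude its alternatives~(2) and~(3) exactly as in the proof of Corollary~\ref{cor-towerInducedVars} (using that for $m>d+1$ the lines on a degree-$d$ hypersurface in $\PP^m$ sweep it out, which would force any such tower to be dense, contradicting the non-sweeping hypothesis). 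Arranging that the uniform bound $2c+1$ at level $n$ persists through this case, rather than degrading, is the heart of the argument.
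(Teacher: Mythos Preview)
Your approach is correct in spirit but significantly more elaborate than the paper's, and the complication you flag as ``the real obstacle'' is an artifact of your setup rather than a genuine difficulty.

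The paper bases the tower at level $n$, not at a degree-dependent level $e+1$. Concretely, it takes $\cB_{n,d}\subset\cU_{n,d}$ to be the \emph{entire} locus swept out by surfaces in the class, and then forms the tower of induced subvarieties $\cB_{j,d}$ for $j\ge n$. The point is that the tower only needs to be compared with the swept-out locus in the upward direction: if $(p,X\cap\Lambda)\in\cB_{n,d}$ for some $\PP^n$-section $\Lambda$ of $X\subset\PP^{n+c}$, then the surface through $p$ in $X\cap\Lambda$ certainly lies in $X$, so $\cB_{n+c,d}$ is contained in the swept-out locus at level $n+c$. The non-sweeping hypothesis makes the latter non-dense, hence $\cB_{n+c,d}$ is non-dense, and Corollary~\ref{cor-towerInducedVars} applied with $m=n+c$ gives $\codim\cB_{n,d}\ge 2c+1$ directly at the base level $n$. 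Since the base is the swept-out locus itself, this is the desired conclusion.

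Your stratification by image degree, and the attendant worry about nondegenerate surfaces of degree $\ge n$, comes from trying to push the tower's base down below level $n$ so that going up recovers the swept-out locus exactly. That is unnecessary: you only need the tower at level $n+c$ to be \emph{contained in} the swept-out locus, not equal to it, and you need equality only at level $n$, which is automatic if that is where you start. So the argument collapses to a couple of lines, and the case analysis on $e$ and the appeal to Theorem~\ref{thm-CoskunRiedl} for large $e$ can be dropped entirely. Your final dimension count for the ``in particular'' clause matches the paper's.
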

\begin{proof}
Let $\cB_{n,d}$ be the locus in $\cU_{n,d}$ swept out by surfaces in the given class, and let $\cB_{r,d}$ be the tower of induced varieties from $\cB_{n,d}$. Then by assumption, we have that $\cB_{n+c,d}$ has codimension at least $1$ in $\cU_{n+c,d}$. By Corollary \ref{cor-towerInducedVars}, it follows that $\cB_{n,d}$ has codimension at least $2c+1$ in $\cU_{n,d}$ and the first part of the result follows. Since generically finite morphisms from a surface to $X$ must sweep out a subvariety of dimension at least 2, we see that $X$ will admit no such morphisms if $2c+1 \geq n-1-1 = n-2$. The second result follows
\end{proof}

\begin{corollary} \label{cor-alphaLambdaNoSurfaces}
Let $\alpha$ be a fixed positive number and $\lambda < 1$ another real number with $\lambda > \frac{3}{2}(2 - \sqrt{2})$. Then for sufficiently large $n$, a very general hypersurface of degree $d \geq n\lambda$ contains no images of generically finite morphisms from a rational surface $S$ with $H \cdot K \leq \alpha H^2$ on $S$.
\end{corollary}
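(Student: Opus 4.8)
The plan is to combine the non-sweeping statement of Corollary~\ref{cor-alphaLambda} with the codimension estimate of Theorem~\ref{thm-noSurfaces}, which upgrades ``a very general hypersurface in a large $\PP^{m}$ is not swept out by a given class of surfaces'' to ``a very general hypersurface in a smaller $\PP^{n}$ contains no surface from that class.'' Fix $\alpha>0$ and $\lambda>1$; we seek $n_0=n_0(\alpha,\lambda)$ such that the conclusion holds for all $n\ge n_0$ and all $d$ with $n/\lambda\le d\le n$. Work with the class $\mathcal C$ of generically finite morphisms $f\colon S\to X$ from a rational surface with $H\cdot K_S\le\alpha H^2$, where $H=f^{*}\OO_X(1)$. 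Because this is a geometric condition, the locus $\cB_{N,d}\subset\cU_{N,d}$ of pairs $(p,X)$ with $p$ in the image of some $f\in\mathcal C$ is $\PGL_{N+1}$-invariant; it is a countable union of constructible sets, and any irreducible component that dominates the space of degree-$d$ hypersurfaces is preserved by the connected group $\PGL_{N+1}$ and hence is an integral $\PGL_{N+1}$-invariant subvariety, so the tower-of-induced-varieties machinery underlying Theorem~\ref{thm-noSurfaces} and Corollary~\ref{cor-towerInducedVars} applies to it.

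The next step is dimension bookkeeping. Set $c=\lceil(n-3)/2\rceil$ and $m=n+c$, so $m\le\tfrac{3n-1}{2}$ and therefore $d\ge n/\lambda\ge\tfrac{2}{3\lambda}\,m$. Apply Corollary~\ref{cor-alphaLambda} with the same $\alpha$ and with the ratio $\lambda'=\tfrac{3\lambda}{2}>1$: it yields $m_0=m_0(\alpha,\lambda')$ such that every very general degree-$d$ hypersurface in $\PP^{m}$ with $m\ge m_0$ and $d\ge m/\lambda'$ is not swept out by images of generically finite morphisms from rational surfaces $S$ with $H\cdot K\le\alpha H^2$. Choosing $n_0$ large enough that $n\ge n_0$ forces $m=n+c\ge m_0$, the hypothesis is met in $\PP^{n+c}$: a very general degree-$d$ hypersurface there is not swept out by surfaces in $\mathcal C$.

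Finally I would invoke Theorem~\ref{thm-noSurfaces} for the class $\mathcal C$. Since a very general degree-$d$ hypersurface in $\PP^{n+c}$ is not swept out by $\mathcal C$ and $c\ge\tfrac{n-3}{2}$, Theorem~\ref{thm-noSurfaces} gives that a very general degree-$d$ hypersurface $X$ in $\PP^{n}$ admits no generically finite morphism from any rational surface in $\mathcal C$ at all. In particular it admits no generically finite $f\colon S\to X$ from the fixed rational surface $S$ with $H\cdot K_S\le\alpha H^2$, since such an $f$ lies in $\mathcal C$; this is exactly the assertion of the corollary.

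The point requiring the most care is confirming that the imported machinery genuinely applies in this numerical range: one must check that the sweep-out locus is integral after restricting to a dominating component, that the degree stays in the Fano range for the enlarged space (immediate, as $d\le n\le n+c$), and that the numerical inequalities demanded by Corollary~\ref{cor-towerInducedVars} and Theorem~\ref{thm-CoskunRiedl} hold, notably $m=n+c\ge d+1$, which is clear because $n+c\ge n+1>d$. What makes the scheme work is that Corollary~\ref{cor-alphaLambda} is uniform in the ratio $\lambda$: enlarging the ambient dimension by the constant factor needed for the tower argument merely replaces $\lambda$ by $\tfrac{3\lambda}{2}$, still greater than $1$, whereas a hypothetical bound of the form $d\ge n-o(n)$ would be destroyed by this step.
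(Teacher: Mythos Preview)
Your proposal is correct and follows exactly the paper's approach: apply Corollary~\ref{cor-alphaLambda} in the enlarged ambient $\PP^{n+c}$ with $c\ge(n-3)/2$, then invoke Theorem~\ref{thm-noSurfaces} to descend to $\PP^n$, absorbing the dimension jump by replacing $\lambda$ with $\tfrac{3\lambda}{2}$. The paper records this as a one-line ``direct application,'' while you have (correctly) spelled out the bookkeeping; your replacement $\lambda\mapsto\tfrac{3\lambda}{2}$ is the right direction, whereas the paper's printed ``$\tfrac{2\lambda}{3}$'' appears to be a slip.
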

\begin{proof}
This is a direct application of Theorem \ref{thm-noSurfaces} to the results of Corollary \ref{cor-alphaLambda}, replacing $\lambda$ with $\frac{2\lambda}{3}$.
\end{proof}

\begin{corollary}
\label{cor-notSweptOut}
Let $S$ be a rational surface and $X \subset \PP_{\CC}^n$ a very general hypersurface of degree $>(2-\sqrt{2})(n+1)+2$. Then $X$ is not covered by the images of generically finite morphisms from $S$.
\end{corollary}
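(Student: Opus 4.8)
The plan is to combine Proposition~\ref{char-p} with the tower-of-induced-varieties machinery of \cite{CoskunRiedl} — essentially the strategy of Theorem~\ref{thm-noSurfaces} with $c=1$, but run for an ``evaluation locus'' in place of a ``swept-out locus'' — together with one elementary dimension count. Fix a general point $p_0\in S$, and for each $m$ let $\mathcal E_m\subset\cU_{m,d}$ be the closure of the set of pairs $(q,X)$ such that $q=f(p_0)$ for some generically finite $f\colon S\to X$. Each $\mathcal E_m$ is $\PGL_{m+1}$-invariant; choosing $p_0$ general, $\mathcal E_m$ is dense in $\cU_{m,d}$ precisely when a very general degree $d$ hypersurface in $\PP^m$ is strongly swept out by $S$; and the induced variety of $\mathcal E_n$ inside $\cU_{n+1,d}$ is contained in $\mathcal E_{n+1}$, because a generically finite morphism $S\to X\cap H$ is in particular a generically finite morphism $S\to X$ sending $p_0$ to the same point.

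\emph{Step 1: the lower bound $\codim(\mathcal E_n\subset\cU_{n,d})\ge 3$.} We may assume $d\le n$, since for $d\ge n+1$ a very general $X$ is not uniruled, hence not covered by rational surfaces. By Proposition~\ref{char-p} applied in $\PP^{n+1}$ together with the hypothesis $d>(2-\sqrt2)(n+1)+2$, a very general degree $d$ hypersurface in $\PP^{n+1}$ is not strongly swept out by $S$; hence $\mathcal E_{n+1}$, and therefore the induced variety of $\mathcal E_n$ in $\cU_{n+1,d}$, is not dense. I would then apply Theorem~\ref{thm-CoskunRiedl} to an irreducible component of $\mathcal E_n$ — each component is $\PGL_{n+1}$-invariant since $\PGL_{n+1}$ is connected — with $m=n+1$. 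Since $d\le n$, so $n+1\ge d+1$, the arguments in the proof of Corollary~\ref{cor-towerInducedVars} exclude cases (2) and (3) of Theorem~\ref{thm-CoskunRiedl} (lines sweep out a very general degree $d$ hypersurface in $\PP^{n+1}$, and the relevant spaces of lines through a point remain nonempty). Case (1) then gives $\codim(\mathcal E_n)\ge 2(m-n)+1=3$.

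\emph{Step 2: covering forces $\codim(\mathcal E_n\subset\cU_{n,d})\le 2$.} Suppose for contradiction that a very general hypersurface $X$ of degree $d$ in $\PP^n$ is covered by images of generically finite morphisms from $S$; in particular $\Hom^0(S,X)\ne\emptyset$. Pick an irreducible component $\mathcal F\subseteq\Hom^0(S,X)$ whose images cover $X$, and let $\mathcal Z\subseteq X\times S$ be the closure of the image of $\mathcal F\times S\to X\times S$, $(f,p)\mapsto(f(p),p)$. Then $\mathcal Z$ is irreducible, $\mathcal Z\to X$ is dominant so $\dim\mathcal Z\ge n-1$, and $\mathcal Z\to S$ is surjective; hence every fiber of $\mathcal Z\to S$ has dimension at least $\dim\mathcal Z-\dim S\ge n-3$, and in particular $\{f(p_0):f\in\mathcal F\}$ has dimension $\ge n-3$. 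Letting $X$ range over the dense locus of very general hypersurfaces gives $\dim\mathcal E_n\ge\dim V_{n,d}+(n-3)$ (where $V_{n,d}$ parametrizes degree $d$ hypersurfaces in $\PP^n$ and $\dim\cU_{n,d}=\dim V_{n,d}+(n-1)$), so $\codim(\mathcal E_n)\le 2$. This contradicts Step 1, so no such covering exists.

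The heart of the matter, and the reason the hypothesis involves $(2-\sqrt2)(n+1)$ rather than $(2-\sqrt2)n$, is the gap between being \emph{covered} (what we negate) and being \emph{strongly swept out} (what Proposition~\ref{char-p} controls, via the Frobenius computation on $f^*T_X$, which genuinely requires the source point of $f$ to be general). Instead of attempting to prove ``covered $\Rightarrow$ strongly swept out'' for very general $X$ — which can fail, for instance when $\operatorname{Aut}(S)$ is finite — the argument plays two codimension estimates for the single locus $\mathcal E_n$ against each other, and the point requiring care is that they are genuinely incompatible: the bound $3=2\cdot1+1$ from one level of the Coskun–Riedl tower must strictly exceed the bound $2$ forced by a two-dimensional family of surfaces covering an $(n-1)$-dimensional hypersurface, which it does. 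The remaining technical points — genericity of $p_0$, $\PGL$-invariance passing to irreducible components, and the exclusion of cases (2) and (3) in the range $d\le n$ — are routine given the results already established.
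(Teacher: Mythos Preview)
Your proof is correct and follows essentially the same approach as the paper's: fix a general point $p_0\in S$, define the evaluation locus $\mathcal E_n\subset\cU_{n,d}$, apply Proposition~\ref{char-p} at level $n+1$ together with Corollary~\ref{cor-towerInducedVars} to force $\codim(\mathcal E_n)\ge 3$, and then observe that a covering family of surfaces would force $\codim(\mathcal E_n)\le 2$. Your Step~2 spells out the fiber-dimension argument more carefully than the paper does, and you are explicit about passing to irreducible components to invoke Theorem~\ref{thm-CoskunRiedl}, but the strategy is identical.
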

\begin{proof}
Observe that the result is immediate for $d > n$, so we may assume $d \leq n$. Let $p \in S$ be a general point, and let $\cB_{n,d}$ be the locus of images of $p$ under a generically finite morphism $S \to X$. Let $\cB_{n+1,d}$ be part of the tower of induced varieties of $\cB_{n,d}$. Then by Proposition \ref{char-p}, $\cB_{n+1,d}$ has codimension at least 1 in $\cU_{n+1,d}$. By Corollary \ref{cor-towerInducedVars}, it follows that $\cB_{n,d}$ has codimension at least 3 in $\cU_{n,d}$. Thus, the locus in $X$ swept out by the images of $p$ under generically finite morphisms from $S$ is codimension at least 3 in $X$. It follows that the images of $S$ under generically finite morphisms sweep out a subvariety of codimension at least 1, as required.
\end{proof}

\begin{corollary} \label{cor-noFixedFamily}
Let $\mathcal{S} \to B$ be a family of rational surfaces of dimension $\dim B = k$ and $n$ and $d$ be integers satisfying $n \geq d > \frac {(2-\sqrt{2})(3n+k+1)}{2} +2$. Then if $X \subset \PP_{\CC}^n$ is a very general hypersurface of degree $d$, $X$ admits no generically finite morphisms from any surface in the fibers of $\mathcal{S} \to B$.
\end{corollary}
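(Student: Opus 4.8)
The plan is to argue by contradiction, feeding the fixed-surface result Proposition~\ref{char-p} into the codimension amplification of Corollary~\ref{cor-towerInducedVars} one fiber at a time, and then adding the resulting codimension estimates over the $k$-dimensional base $B$.

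Suppose a very general degree-$d$ hypersurface in $\PP_{\CC}^n$ admits a generically finite morphism from some fiber of $\mathcal S\to B$. Since $d\leq n$ and $k\geq 0$, the degree hypothesis forces $d$, and hence $n$, to be large; in particular $d\geq 3$. Fix a closed point $b_0\in B$ with $\mathcal S_{b_0}$ a smooth rational surface, fix a general point $p_0\in\mathcal S_{b_0}$, and for $N\geq 1$ let $\cB^{b_0}_N\subset\cU_{N,d}$ be the closure of the set of pairs $(f(p_0),Y)$ with $Y\subset\PP^N$ a degree-$d$ hypersurface and $f\colon\mathcal S_{b_0}\to Y$ generically finite; this locus is $\PGL_{N+1}$-invariant, and we replace it by one of its irreducible components, which remains $\PGL_{N+1}$-invariant. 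Choose an integer $m$ with $m>n$, $m\geq d+1$, and $2(m-n)+1\geq n+k$ — for instance $m=\lceil(3n+k-1)/2\rceil$; then $m\leq(3n+k)/2$, so the hypothesis $d>\frac{(2-\sqrt{2})(3n+k+1)}{2}+2$ yields $d>(2-\sqrt{2})m+2$, and $d\geq 3$ yields $m\geq d+1$.

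Build the tower of induced varieties of $\cB^{b_0}_n$ and follow it up to level $m$. Any pair in level $m$ of the tower has a linear section lying in $\cB^{b_0}_n$, hence admits a generically finite morphism from $\mathcal S_{b_0}$ into a linear section of $Y$ — a fortiori into $Y$ — carrying $p_0$ to the marked point; thus level $m$ of the tower is contained in $\cB^{b_0}_m$. By Proposition~\ref{char-p} in $\PP^m$ (applicable since $d>(2-\sqrt{2})m+2$), a very general degree-$d$ hypersurface in $\PP^m$ is not strongly swept out by $\mathcal S_{b_0}$, so the image of $p_0$ under generically finite morphisms $\mathcal S_{b_0}\to Y$ is not dense in $Y$; hence $\cB^{b_0}_m$, and with it level $m$ of the tower, is not dense in $\cU_{m,d}$. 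Since moreover $m\geq d+1$, Corollary~\ref{cor-towerInducedVars} gives $\codim(\cB^{b_0}_n\subset\cU_{n,d})\geq 2(m-n)+1\geq n+k$.

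Finally, let $V$ be the space of degree-$d$ hypersurfaces in $\PP_{\CC}^n$ and $Z\subset V$ the locus of those admitting a generically finite morphism from some fiber of $\mathcal S\to B$. If $Y$ admits such a morphism from $\mathcal S_{b_0}$, then $(f(p_0),Y)\in\cB^{b_0}_n$, so $Y$ lies in the image of $\cB^{b_0}_n$ under $\cU_{n,d}\to V$; since $\dim\cU_{n,d}-\dim V=n-1$ and $\cB^{b_0}_n$ has codimension at least $n+k$, that image has dimension at most $\dim V-k-1$. Carrying out the construction in a family over $B$ — taking $p_0$ to be a general point of the generic fiber of $\mathcal S\to B$ and spreading out — shows that $Z$ is contained in the image of a variety with $k$-dimensional base all of whose fibers have dimension at most $\dim V-k-1$, whence $\dim Z\leq\dim V-1<\dim V$. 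So a very general degree-$d$ hypersurface in $\PP_{\CC}^n$ admits no such morphism, contradicting the assumption. I expect the main difficulty to lie in this last step: one must check that the codimension estimate for $\cB^{b_0}_n$ holds uniformly as $b_0$ varies and genuinely assembles into a family over $B$, so that the crude bound $\dim Z\leq k+\dim(\text{fiber})$ is justified; this requires care with the choice of the general point $p_0\in\mathcal S_{b_0}$ and with spreading the argument out over $\CC(B)$. A lesser point is confirming that a single $m\approx(3n+k)/2$ meets all of $m>n$, $m\geq d+1$, $2(m-n)+1\geq n+k$, and $d>(2-\sqrt{2})m+2$ at once — the degree hypothesis is arranged precisely for this.
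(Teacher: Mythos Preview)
Your proposal is correct and follows essentially the same approach as the paper. Both arguments invoke Proposition~\ref{char-p} at a level $m\approx(3n+k)/2$ to see that the relevant locus in $\cU_{m,d}$ is not dense, apply Corollary~\ref{cor-towerInducedVars} to descend to level $n$ with codimension $\geq 2(m-n)+1$, absorb the $k$ parameters of $B$, and read off the numerical condition on $d$. The only cosmetic differences are that the paper tracks the locus in $\cU_{n,d}$ swept out by images of $S$ (as in Theorem~\ref{thm-noSurfaces}) and takes the union $\cB'$ over $B$ \emph{before} estimating codimension, whereas you track the locus of images of a fixed general point $p_0\in S$ (as in Corollary~\ref{cor-notSweptOut}) and project each $\cB^{b_0}_n$ to the parameter space $V$ of hypersurfaces \emph{before} taking the union; you are also more explicit about the choice of $m$ and the verification of $m>n$, $m\geq d+1$, and $d>(2-\sqrt{2})m+2$. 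The spreading-out subtlety you flag at the end is genuine but routine, and the paper's proof is equally terse on that point.
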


Note that $\frac{3(2-\sqrt{2})}{2} < \frac{9}{10}$, so the result holds for $d \geq \frac{9n}{10}+\frac{3k}{10}+3$.

\begin{proof}
As we saw in the proof of Corollary \ref{cor-notSweptOut}, for any fixed surface $S$ in the fibers of $\mathcal{S} \to B$, we can construct $\cB_{r,d}$, the tower of induced varieties on the locus in $\cU_{n,d}$ swept out by generically finite images from $S$. We have already seen that $\cB_{m,d}$ has codimension at least $1$ in $\cU_{m,d}$ for some $m$ satisfying $d > (2-\sqrt{2})(m+1)+2$. Thus, $\cB_{m-c,d}$ will have codimension at least $2c+1$ in $\cU_{m-c,d}$ for $r$ in this range. If we let $\cB'_{m,d}$ be the union of all the $\cB_{m,d}$ over all the different possible $S$, then $\cB'_{m-c,d}$ will have codimension at least $2c+1-k$ in $\cU_{m-c,d}$. If $2c+1-k \geq m-c$, then we see that a very general $X$ of degree $d$ in $\PP^{m-c}$ will admit no generically finite maps from the fibers of $S$. Using $m-c = n$ and rearranging, we see that the result holds as claimed.
\end{proof}

\begin{corollary}
If $X$ is a very general hypersurface of degree $d$ in $\PP^n$ and $n \geq d > \frac{(2-\sqrt{2})(3n+1)}{2}+2$, then $X$ admits no generically finite maps from Hirzebruch surfaces. In particular, any rational curve in the space of rational curves on $X$ has to meet the boundary.
\end{corollary}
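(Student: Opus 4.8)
The plan is to read off both statements from the case $k=0$ of Corollary \ref{cor-noFixedFamily}. When $k=0$ the family $\mathcal S\to B$ is a single rational surface, and the numerical hypothesis $n\geq d>\tfrac{(2-\sqrt 2)(3n+k+1)}{2}+2$ becomes exactly $n\geq d>\tfrac{(2-\sqrt 2)(3n+1)}{2}+2$; so Corollary \ref{cor-noFixedFamily} already tells us that a very general hypersurface $X$ of degree $d$ in $\PP_{\CC}^n$ in this range admits no generically finite morphism from any one fixed rational surface. First I would apply this with $S=\mathbb F_e$ for each $e\geq 0$. For a fixed $S$ the locus of degree $d$ hypersurfaces admitting a generically finite morphism from $S$ is a countable union of proper closed subsets of the parameter space of hypersurfaces (the relevant Hom scheme is only locally of finite type, with countably many components indexed by the numerical type of the map), and since there are only countably many Hirzebruch surfaces, the union of these loci over all $e$ is again a countable union of proper closed subsets. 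A very general $X$ avoids all of it, so $X$ admits no generically finite morphism from any Hirzebruch surface.

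Next I would deduce the ``in particular'' clause by contraposition. Suppose $R$ is a complete rational curve in $\overline{\MM}_{0,n}(X,e)$ disjoint from the boundary, i.e.\ lying entirely in the locus parametrizing embedded smooth rational curves. Composing with a morphism $\PP^1\to R$ (say the normalization), pull back the universal family of stable maps over this locus: one obtains a smooth proper morphism $\mathcal C\to\PP^1$ all of whose fibers are smooth rational curves, together with an evaluation morphism $g:\mathcal C\to X$ which on each fiber is the embedding of that rational curve into $X$. By Tsen's theorem the fibration $\mathcal C\to\PP^1$ is Zariski-locally trivial, so $\mathcal C\cong\PP(\OO\oplus\OO(a))$ for some $a\geq 0$ by Grothendieck's splitting theorem; that is, $\mathcal C$ is a Hirzebruch surface. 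Since $R$ is nonconstant, $g$ sends the fibers of $\mathcal C\to\PP^1$ to pairwise distinct curves of $X$, so $g$ has two-dimensional image and is generically finite onto it. This contradicts the first part of the corollary, so no such $R$ exists; equivalently, every rational curve in the space of rational curves on $X$ meets the boundary.

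The argument is essentially a dictionary translation, so there is no serious obstacle: everything of substance is already in Corollary \ref{cor-noFixedFamily}, and the numerical bound is inherited verbatim. The two points I would be careful about are the passage from one very-general statement to the countably many surfaces $\mathbb F_e$ (which is precisely why the notion of ``very general'' is needed here), and the verification that the total space of a nontrivial family of smooth rational curves in $X$ over a boundary-free rational curve is genuinely a Hirzebruch surface admitting a generically finite map to $X$ — i.e.\ that both the $\PP^1$-fibration structure and the non-contracting property hold. Both are standard, and neither affects the degree bound.
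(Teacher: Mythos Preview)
Your proposal is correct and matches the paper's approach: the paper states this corollary with no proof, treating it as an immediate consequence of Corollary \ref{cor-noFixedFamily} with $k=0$, and you have simply filled in the natural details (the countable intersection over all $\mathbb F_e$, and the standard identification of a boundary-free complete rational curve in the moduli space with a Hirzebruch surface mapping generically finitely to $X$).
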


\bibliographystyle{plain}

\end{document}